\let\@fnsymbol\@arabic
 \theoremstyle{plain}
\newtheorem{ro}{Corollary}
\newtheorem{thm}{Theorem}
\theoremstyle{definition}
\newtheorem{rem}{Remark}
\newtheorem{lm}{Lemma}
\newtheorem{exm}{Example}
\theoremstyle{remark}
\title%[Short Paper Title] % (optional, use only with long paper titles)
{Linear type conditional specifications for multivariate count variables}
\author{%Christian Gouri\'eroux\thanks{University of Toronto, Toulouse School of Economics and CREST. Email: gouriero@ensae.fr} '
Yang Lu\thanks{Department of Mathematics and Statistics, Concordia University, Montreal, Canada. Email: yang.lu@concordia.ca}
\and
Wei Sun\thanks{Department of Mathematics and Statistics, Concordia University, Montreal, Canada. Email: wei.sun@concordia.ca}  }
 \date{}
\begin{document}
\maketitle

%\newpage

%

\textbf{Abstract}:  This paper investigates conditional specifications for multivariate count variables.  Recently,  the spatial count data literature has proposed several conditional models such that the conditional expectations are linear in the conditioning variables.  These models are much easier to estimate than existing spatial count models based on Gaussian random field. However,  whether or not such conditional specifications are compatible have not been addressed.  We investigate two large families of conditional models, that are the compound autoregressive model and the random coefficient integer autoregressive model.  We characterize all the solutions to these two families of models at arbitrary dimensions, and find that only a handful of them admit non-trivial solutions.  We then show that if we focus on the linearity condition of the conditional expectations only,  a considerable larger family of solutions can be obtained.   This suggests that for spatial count data modeling,  semi-parametric type specifications that impose the conditional expectation structure is preferable.
  \vspace{1em}

\textbf{Keywords: }   Random coefficient count model,  compound autoregressive model,  conjugate prior,  conditional specification,  spatial count data, linear programming.
 \vspace{1em}

%\textbf{Acknowledgment}:  Lu acknowledges support from NSERC through grant RGPIN-2021-04144.

%\textbf{Conflict of interest statement}: The authors declare having no conflict of interest.

 %\newpage
\section{Introduction}
In this paper,  we investigate conditional specifications of multivariate count variables.  In general,  the distribution of a random vector $(X_1,...,X_n)$ is characterized by the conditional distributions of $X_j \mid X_1,...,X_{j-1},X_{j+1},...,X_n, j=1,...,n$ [see \cite{nerlove1976multivariate} for the discrete case and \cite{gourieroux1979characterization} for the continuous case].  This result is the cornerstone of the computational statistics literature on Gibbs sampling.  It also provides an alternative way of specifying the joint distribution through these conditional distributions,  which is sometimes easier to visualize,  especially in high dimensions [see  e.g.  \cite{heckerman2000dependency,  allen2013local, bengio2014deep, hadiji2015poisson} for recent attempts in the machine learning literature].   This approach through the conditionals, however, needs to be compatible, also called consistent, that is, the $n$ conditional distributions should also ensure the existence of a valid $n-$variate joint distribution.  This problem has been considered by, among others, \cite{arnold1989compatible, arnold2001conditionally, ip2009canonical,berti2014compatibility,  arnold2022conditional},  but most of these papers focus on continuously valued variables or finite discrete variables.  To our knowledge, very few conditional models for multivariate count data (non-negative integer valued, with no upper bound) have been considered, besides the auto-Poisson model of \cite{besag1974spatial}, also called  Obrechkoff's Poisson model \citep{arnold2001conditionally}.  In the simplest bivariate case,  this distribution postulates the conditional distributions:
$$
\ell(x \mid y) \propto  \lambda_1^x (\lambda_3^y)^x/x!,  \qquad \ell(y \mid x) \propto  \lambda_2^x (\lambda_3^x)^y/y!,\ \ \ \  \forall x, y \in \mathbb{N}_0,
$$
where $\mathbb{N}_0$ is the set of non-negative integers,  parameters $\lambda_1, \lambda_2>0$ and $\lambda_3 \in (0, 1)$.   This model has at least two downsides.  First,  the correlation between $X$ and $Y$ is negative.  As \cite{besag1974spatial} acknowledges, ``\textit{this restriction is severe and necessarily implies a `competitive' rather than `co-operative' interaction between auto-Poisson variates}". Second,  the marginal expectations of $X$ and $Y$ are not tractable,  and become even more complicated when the model is extended into higher dimensions.

In this paper, we fill the gap by studying alternative conditional specifications for multivariate count distributions, such that the conditional expectations $\mathbb{E}[X_j \mid X_1,...,X_{j-1},X_{j+1},...,X_n]$, $j=1,...,n$, are all linear in the conditioning variables.  A starting,  toy example is the following conditional linear Poisson model:
  \begin{equation}
  \label{linearpoisson}
  X \mid Y  \sim Pois( c Y+ d), \qquad   Y \mid X  \sim Pois( aX+ b).
  \end{equation}
The motivation of considering such linear type specifications is that first, linear specification is very convenient,  especially in a high-dimensional setting, and they have had many success stories in several related literatures.  For instance,  they have become the driving force in the spatial econometrics literature for continuously valued data \citep{lee2023spatial},  under the name of Spatial AutoregRessive (SAR) models.  Similarly, if in \eqref{linearpoisson},  only one conditional distribution is specified,  then the conditional Poisson model with parameters linear in the conditioning variable is called INGARCH and is one of the most popular count time series models [see e. g. \cite{ferland2006integer} and also \cite{davis2021count} for  reviews].  Second,  models like \eqref{linearpoisson} have recently appeared in the spatial count data literature.  In particular,  in the simplest bivariate case,  the models of \cite{glaser2022spatial, jung2022modelling} coincide with model \eqref{linearpoisson}.
%The approach of conditional specification has recently been explored for spatial \cite{glaser2022spatial} or spatial count data literature \cite{ghodsi2012first}.  This paper studies the compatibility of such conditional specifications.
Alternatively,  \cite{ghodsi2012first, brannas2014adaptations, chutoo2021unilateral, karlis2024multilateral, cote2025tree} suggest the following integer-autoregressive specification:
\begin{equation}
  \label{inar}
 X \mid Y \sim Bin( \alpha_1, Y) \circ   F_1 \qquad   Y \mid X  \sim  Bin( \alpha_2, X) \circ   F_2,
  \end{equation}
  where $F_1,  F_2$ are count distributions,  and $\alpha_{1},  \alpha_2$ are probability parameters.
  %These specifications have the advantage that the conditional expectation is linear.   This can be seen as an improvement compared to the initial auto-Poisson spatial count model of \cite{besag1974spatial}, which assumes an exponential functional form for the conditional expectation and can only capture negative correlation.
  %To study the compatibility of models \eqref{linearpoisson} and \eqref{inar}, let us consider the simplest case where there are only two spatial observations $X$ and $Y$.  Then \eqref{linearpoisson} becomes:
We remark that both model \eqref{linearpoisson} and model \eqref{inar} can be written as:
\begin{equation}
  \label{inar2}
  X \mid  Y=_{(d)} \sum_{i=1}^Y Z_i +\epsilon,  \qquad Y\mid X =_{(d)} \sum_{i=1}^X W_i +\eta,
  \end{equation}
  where $=_{(d)}$ means equality in distribution,  $(Z_i)$ and $(W_i)$ are two sequences of i.i.d.  random variables, while $\epsilon$ and $\eta$ are count variables independent of $(Z_i)$ and $(W_i)$, respectively.  In model \eqref{linearpoisson},  $(Z_i), (W_i), \epsilon$ and $\eta$ are Poisson, whereas in model \eqref{inar},  $(Z_i)$ and $(W_i)$ are Bernoulli and $\epsilon, \eta$ are Poisson.  Equivalently, model \eqref{inar2} can be rewritten using the conditional pgf:
\begin{equation}
	\label{conditionalpgf}
	\mathbb{E}[u^X|Y]= \Big(\mathbb{E}[u^{Z_1}]\Big)^Y \mathbb{E}[u^\epsilon], \qquad 	\mathbb{E}[v^Y|X]= \Big(\mathbb{E}[v^{W_1}]\Big)^X \mathbb{E}[v^\eta].
\end{equation}
We call model \eqref{inar2} compound autoregressive, or affine \citep{darolles2006structural,  lu2019predictive, darolles2019bivariate}.
%This kind of specification is called affine (since the conditional pgf is exponential affine in the conditioning variable) or compound autoregressive   %Under the assumptions that $X$ and $Y$ have finite mean, we get \eqref{linearcompatible}. Thus model \eqref{compound} or equivalently \eqref{conditionalpgf} is a special case of model \eqref{linearcompatible}.

  We also consider an alternative model,  in which we assume $(Z_i)$ and $(W_i)$ to be Bernoulli, but we relax the i.i.d. assumption, and assume that they are only i.i.d. given stochastic probability parameters $p$ and $q$, respectively.  We call this specification random coefficient (integer-autoregressive) model. Both the compound autoregressive model and the random coefficient model imply that:
\begin{equation}
\label{linearconditionalexp}
  \mathbb{E}[X \mid Y] = c Y+d,  \qquad    \mathbb{E}[Y \mid X] = a X+b.
\end{equation}
%where $a, b, c, d$ are nonnegative.
%which,  if compatible with each other, leads to simple marginal and cross moments of $X$ and $Y$.
  % Compared to the auto-Poisson model \eqref{autopoisson},  such

 We say that a joint distribution is a solution to a conditional model   if it leads to the postulated conditional distributions.   In this paper, we start by completely characterizing solutions to the compound autoregressive and the random coefficient models, as well as their respective $n-$variate extensions.   We find that:
 
 \noindent $i)$ the compound autoregressive and the random coefficient models allow for two and one families of solutions,  respectively.  
 
 \noindent $ii)$ when both models are extended to the $n-$variate case, only the compound autoregressive model allows for one solution involving Poisson-gamma conjugacy.  That is,  $X_1,...,X_n$ are conditionally i.i.d.  given a latent variable $U$,  with Poisson distributions $Pois(\lambda_i U), i=1,...,n$,  while $U$ is gamma distributed. 
 
 \noindent $iii)$ given this lack of flexibility,  we consider the more general linear conditional expectation model \eqref{linearconditionalexp},  which extends both the compound autoregressive and the random coefficient models.  We also extend this model to the $n-$dimensional case and study its solutions.  Since the conditional expectations do not completely characterize the distribution,  we cannot work out all the solutions to this latter specification.  Nevertheless, we show that there are many more models that satisfy the linear conditional expectation assumption.  These results suggest that when working with spatial count data,  specifications like \eqref{linearpoisson} and \eqref{inar} might not be valid, and instead, semi-parametric specifications that focus on the conditional expectations are preferable.

  The rest of the paper is organized as follows.  Section 2 studies some elementary models, for which the conditional pmf's are easy to express.  Section 3 characterizes the solutions to the compound autoregressive model.  Section 4 characterizes the solutions of the random coefficient model. Section 5 studies the linear conditional expectation model.  Section 6 concludes.  Lengthy proofs are relegated to the Appendix.

  \section{Preliminaries: solutions of models \eqref{linearpoisson} and \eqref{inar}}

In this section, we start by checking the compatibility of the toy models \eqref{linearpoisson} and \eqref{inar}.  Our starting point is the following elementary result.

\begin{thm}[\cite{arnold2001conditionally}, Theorem 4.1]
The two conditional distributions $\ell(x \mid y)$ and $\ell(y \mid x)$ are compatible if and only if 

\noindent $i)$ the domains of $\ell(x \mid y)$ and $\ell(y \mid x)$ are identical and

\noindent $ii)$ there exist suitable functions $u(\cdot)$ and $v(\cdot)$ such that:
\begin{equation}
	\label{casen2}
\frac{\ell(x \mid y)}{\ell(y \mid x)} =u(x) v(y), \qquad  \forall x, y.
\end{equation}
\end{thm}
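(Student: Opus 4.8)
The plan is to prove the two implications separately, exploiting throughout the defining identities $\ell(x\mid y)=f(x,y)/f_Y(y)$ and $\ell(y\mid x)=f(x,y)/f_X(x)$ that any joint pmf $f$ with marginals $f_X,f_Y$ must satisfy on the region where the relevant marginal is positive.

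\emph{Necessity.} First I would suppose the two conditionals are compatible, i.e.\ that such a joint $f$ exists and reproduces them. Wherever a conditional is defined it is positive exactly when $f(x,y)>0$, so $\ell(x\mid y)>0\iff f(x,y)>0\iff\ell(y\mid x)>0$; this is precisely condition $i)$, both conditionals being supported on the support of $f$. On that common support I would divide the two identities to get
\[
\frac{\ell(x\mid y)}{\ell(y\mid x)}=\frac{f(x,y)/f_Y(y)}{f(x,y)/f_X(x)}=\frac{f_X(x)}{f_Y(y)},
\]
which is the factorization $ii)$ with the explicit choice $u(x)=f_X(x)$ and $v(y)=1/f_Y(y)$.

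\emph{Sufficiency.} Conversely, assuming $i)$ and $ii)$, I would build a candidate joint and verify its conditionals. Set $g(x,y):=u(x)\,\ell(y\mid x)$ on the common domain furnished by $i)$. Condition $ii)$, in the form $\ell(x\mid y)=u(x)v(y)\,\ell(y\mid x)$, lets me rewrite $g(x,y)=v(y)^{-1}\ell(x\mid y)$. Summing the first expression over $y$ and using $\sum_y\ell(y\mid x)=1$ gives $\sum_y g(x,y)=u(x)$; summing the second over $x$ and using $\sum_x\ell(x\mid y)=1$ gives $\sum_x g(x,y)=v(y)^{-1}$. Hence, setting $c:=\big(\sum_{x,y}g(x,y)\big)^{-1}=\big(\sum_x u(x)\big)^{-1}$ and $f:=c\,g$, one checks immediately that $f(y\mid x)=g(x,y)/\sum_{y'}g(x,y')=\ell(y\mid x)$ and, symmetrically, $f(x\mid y)=\ell(x\mid y)$, so $f$ is a joint realizing both prescribed conditionals.

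\emph{The main obstacle.} The one genuinely delicate point is normalization. The two marginalizations above show $\sum_x u(x)=\sum_y v(y)^{-1}$, the common total mass of $g$, and the construction yields a \emph{proper} joint only when this sum is finite. Condition $ii)$ fixes the shapes $f_X\propto u$ and $f_Y\propto v^{-1}$ only up to the rescaling $u\mapsto\alpha u,\ v\mapsto v/\alpha$, which cannot alter finiteness of the total mass, so convergence is an intrinsic extra requirement. For variables on a finite support it holds automatically, but for count variables on $\mathbb{N}_0$ it may fail, and this is exactly where conditionals obeying $i)$--$ii)$ can still be incompatible. I would therefore treat the convergence of $\sum_x u(x)$ as the substantive check, reading the ``suitable functions'' in $ii)$ as carrying this summability with them.
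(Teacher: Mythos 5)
Your proof is correct, and there is nothing in the paper to compare it against: the paper states this result as Theorem 4.1 of \cite{arnold2001conditionally} without reproducing a proof. Your argument is essentially the canonical one from that source — necessity by writing both conditionals through a common joint $f$ and reading off $u(x)=f_X(x)$, $v(y)=1/f_Y(y)$; sufficiency by reconstructing the joint as $f(x,y)\propto u(x)\,\ell(y\mid x)=v(y)^{-1}\ell(x\mid y)$ and verifying that both conditionals of $f$ agree with the prescribed ones. You have also put your finger on the one genuine subtlety: the construction produces a probability distribution only if $\sum_x u(x)<\infty$. In the original theorem of Arnold, Castillo and Sarabia this appears explicitly as the requirement that $u$ be integrable, and it is exactly what the paper's phrase ``suitable functions'' is silently carrying; on $\mathbb{N}_0$ it is a substantive restriction, not a formality, so your decision to treat summability as part of condition $ii)$ rather than as a consequence of it is the right reading, and without it the factorization alone would only yield a $\sigma$-finite (possibly infinite-mass) measure with the prescribed conditional shapes.
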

In other words, the ratio of the two conditional distributions should take a separable form.  For both models \eqref{linearpoisson} and \eqref{inar},  the conditional distributions have simple expression and thus we can easily use Theorem 1 to check their compatibility.
\begin{ro}
Eq.\eqref{linearpoisson} is not compatible except when $a=c=0$.
\end{ro}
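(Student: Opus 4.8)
The plan is to apply Theorem~1 directly to the two Poisson conditionals in \eqref{linearpoisson}. First I would record the explicit conditional pmf's,
$$\ell(x\mid y)=\frac{e^{-(cy+d)}(cy+d)^x}{x!},\qquad \ell(y\mid x)=\frac{e^{-(ax+b)}(ax+b)^y}{y!},$$
and observe that, under the natural parameter restrictions $b,d>0$ and $a,c\ge 0$ (so that every rate is strictly positive on $\mathbb{N}_0$), both conditionals are supported on all of $\mathbb{N}_0$. Hence condition $i)$ of Theorem~1 (identical domains) holds automatically, and the whole question reduces to the separability condition $ii)$ in \eqref{casen2}.

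Next I would form the ratio and pass to logarithms, which is legitimate since both pmf's are strictly positive on the common support. Writing $\log\{\ell(x\mid y)/\ell(y\mid x)\}$ out term by term, the only summands that couple the two arguments are $x\log(cy+d)$ and $-\,y\log(ax+b)$; every remaining term is a pure function of $x$ (namely $ax+b-\log x!$) or of $y$ (namely $-(cy+d)+\log y!$). Consequently the ratio factors as $u(x)v(y)$ if and only if the interaction part
$$h(x,y):=x\log(cy+d)-y\log(ax+b)$$
is additively separable on the lattice $\mathbb{N}_0^2$.

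The key step, and the one I expect to be the main obstacle, is to characterize additive separability by the vanishing of the discrete mixed second difference. On a product domain, $h(x,y)=\phi(x)+\psi(y)$ for some $\phi,\psi$ if and only if
$$\Delta_x\Delta_y\,h(x,y):=h(x+1,y+1)-h(x+1,y)-h(x,y+1)+h(x,y)=0\qquad\forall\,x,y\in\mathbb{N}_0.$$
A direct computation collapses this to
$$\log\frac{c(y+1)+d}{cy+d}-\log\frac{a(x+1)+b}{ax+b}=0\qquad\forall\,x,y.$$
Since the first term depends only on $y$ and the second only on $x$, each must be constant. But $\tfrac{c(y+1)+d}{cy+d}=1+\tfrac{c}{cy+d}$ is constant in $y$ only if $c=0$ (otherwise it is strictly monotone in $y$), and likewise $1+\tfrac{a}{ax+b}$ is constant in $x$ only if $a=0$; this forces $a=c=0$. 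The delicate part is thus the clean bookkeeping that isolates $h$ as the sole interaction term, together with the monotonicity argument over the integer lattice showing the ratio cannot be constant unless the slope vanishes.

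Finally I would note that the converse is immediate: when $a=c=0$ the two conditionals no longer depend on the conditioning variable, so $X$ and $Y$ are independent $Pois(d)$ and $Pois(b)$ variates, which are trivially compatible. Combining this with the forward direction yields exactly the claimed statement, that \eqref{linearpoisson} is incompatible except in the degenerate case $a=c=0$.
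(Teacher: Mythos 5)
Your proof is correct and follows essentially the same route as the paper: apply Theorem~1, form the ratio $\ell(x\mid y)/\ell(y\mid x)$, and observe that the interaction factor $(cy+d)^x/(ax+b)^y$ is separable only when $a=c=0$. The only difference is that the paper simply asserts this last non-separability claim, whereas you justify it rigorously via the vanishing of the discrete mixed second difference $\Delta_x\Delta_y\,h$ and the monotonicity of $1+\tfrac{c}{cy+d}$ in $y$, which is a welcome addition rather than a departure.
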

This corollary implies that the models considered by \cite{glaser2022spatial}, \cite{jung2022modelling} are not compatible.
\begin{proof}
	Under \eqref{linearpoisson}, we have that:
	$$
\frac{\ell(x \mid y)}{\ell(y \mid x)} = \frac{e^{-cy-d}(cy+d)^x y!}{x! e^{-ax-b}(ax+b)^y }.
	$$
	Then, the term $\frac{(cy+d)^x }{ (ax+b)^y }$ is separable only when $a=c=0$.
\end{proof}
\begin{ro}\label{Feb22a}
Suppose that $(Z_i)$ and $(W_i)$ are Bernoulli random variables with parameters $\alpha$ and $\beta$, respectively. Then, eq.\eqref{inar2} is compatible if and only if the distributions of $\epsilon$ and $\eta$ are Poisson and their Poisson parameters $\lambda_\epsilon$ and $\lambda_\eta$ satisfy
\begin{equation}\label{Feb22c}
\frac{\alpha }{\lambda_\epsilon(1-\alpha)}=\frac{\beta }{\lambda_\eta(1-\beta)}.
\end{equation}
\end{ro}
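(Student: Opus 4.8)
My plan is to work entirely with probability generating functions, since the conditional pgf identities in \eqref{conditionalpgf} are clean whereas the conditional pmf's are convolutions of a binomial with the innovation law, which makes the separability test of Theorem~1 awkward here. Write $\phi_X(u)=\mathbb{E}[u^X]$, $\phi_Y(v)=\mathbb{E}[v^Y]$ for the marginal pgf's and $g_\epsilon(u)=\mathbb{E}[u^\epsilon]$, $g_\eta(v)=\mathbb{E}[v^\eta]$ for the innovations. Since $Z_1$ is $\mathrm{Bernoulli}(\alpha)$ we have $\mathbb{E}[u^{Z_1}]=1+\alpha(u-1)=:s$, and likewise $\mathbb{E}[v^{W_1}]=1+\beta(v-1)=:t$. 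Taking $\mathbb{E}[v^Y\,\cdot\,]$ (resp. $\mathbb{E}[u^X\,\cdot\,]$) of the identities in \eqref{conditionalpgf} shows that a joint law solves \eqref{inar2} if and only if its pgf $\Phi$ satisfies $\Phi(u,v)=\phi_Y(vs)\,g_\epsilon(u)=\phi_X(ut)\,g_\eta(v)$. Putting $v=1$ and $u=1$ recovers the marginal relations $\phi_X=\phi_Y(s)\,g_\epsilon$ and $\phi_Y=\phi_X(t)\,g_\eta$; eliminating $g_\epsilon,g_\eta$ leaves the single functional equation
\[
\frac{\phi_Y(vs)}{\phi_Y(s)\,\phi_Y(v)}=\frac{\phi_X(ut)}{\phi_X(u)\,\phi_X(t)},\qquad u,v\in(0,1).
\]

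To solve this I would set $\psi_X=\log\phi_X$, $\psi_Y=\log\phi_Y$ (well defined and real-analytic on $(0,1)$) and apply $\partial^2/\partial u\,\partial v$. Only the genuinely bivariate terms survive, and because $\partial s/\partial u=\alpha$ and $\partial t/\partial v=\beta$, the equation collapses to $\alpha\,\Theta_Y(sv)=\beta\,\Theta_X(ut)$, where $\Theta_X(w)=\bigl(w\,\psi_X'(w)\bigr)'$ and similarly $\Theta_Y$. The map $(u,v)\mapsto(sv,ut)$ has Jacobian $\alpha\beta\,uv-st$, which is nonzero as $(u,v)\to(1,1)$, so its image contains an open set; a function of $sv$ that agrees with a function of $ut$ on an open set must be constant, and by analyticity the constants extend to all of $(0,1)$. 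Hence $\Theta_X\equiv\kappa_X$, $\Theta_Y\equiv\kappa_Y$ with $\alpha\kappa_Y=\beta\kappa_X$, and integrating $\bigl(w\psi_X'(w)\bigr)'=\kappa_X$ subject to $\psi_X(1)=0$ yields $\phi_X(w)=w^{c}e^{\kappa_X(w-1)}$ and $\phi_Y(w)=w^{c'}e^{\kappa_Y(w-1)}$, where the integration constants $c,c'$ must be nonnegative integers for $\phi_X,\phi_Y$ to be genuine pgf's.

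The step I expect to be the main obstacle is ruling out the shifts $c=c'=0$, i.e. showing the marginals and innovations are honestly Poisson rather than shifted Poisson. Substituting back gives $g_\epsilon(u)=\phi_X(u)/\phi_Y(s)=u^{c}\,(1-\alpha+\alpha u)^{-c'}\,e^{\lambda_\epsilon(u-1)}$ with $\lambda_\epsilon=\kappa_X-\alpha\kappa_Y$, and symmetrically for $g_\eta$. Multiplying by the binomial pgf $(1-\alpha+\alpha u)^{c'}$ turns this into $g_\epsilon(u)\,(1-\alpha+\alpha u)^{c'}=u^{c}e^{\lambda_\epsilon(u-1)}$, that is $\epsilon+\mathrm{Bin}(c',\alpha)=_{(d)}c+\mathrm{Pois}(\lambda_\epsilon)$ with independent summands. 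A shifted Poisson law admits only shifted-Poisson convolution factors (Raikov's theorem), whereas $\mathrm{Bin}(c',\alpha)$ with $c'\ge 1$ and $0<\alpha<1$ is bounded and hence not of that form; therefore $c'=0$, and the symmetric argument applied to $g_\eta$ forces $c=0$. Consequently $\phi_X,\phi_Y,g_\epsilon,g_\eta$ are all Poisson, so $\epsilon\sim\mathrm{Pois}(\lambda_\epsilon)$ and $\eta\sim\mathrm{Pois}(\lambda_\eta)$, and the relation $\alpha\kappa_Y=\beta\kappa_X$ together with $\lambda_\epsilon=\kappa_X(1-\beta)$ and $\lambda_\eta=\kappa_Y(1-\alpha)$ rearranges exactly into \eqref{Feb22c}. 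For the converse I would run the construction forward: given Poisson innovations satisfying \eqref{Feb22c}, the function
\[
\Phi(u,v)=\exp\!\big[\lambda_\epsilon(u-1)+\lambda_\eta(v-1)+\tfrac{\alpha\lambda_\eta}{1-\alpha}(uv-1)\big]
\]
is the pgf of a (Holgate) bivariate Poisson whose two factorizations $\phi_Y(vs)\,g_\epsilon(u)$ and $\phi_X(ut)\,g_\eta(v)$ reproduce the conditional pgf's in \eqref{conditionalpgf}, establishing compatibility.
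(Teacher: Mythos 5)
Your proof is correct, but it takes a genuinely different route from the paper's. The paper stays at the level of conditional pmf's: it invokes the separability criterion of Theorem 1, evaluates the ratio \eqref{casen2} at the special values $x=0$, $y=0$ and $x=1$ to force $h(y)/h(y-1)=C_1/y$ (hence Poisson innovations), and then reads \eqref{Feb22c} off a binomial-sum identity. You instead work with pgf's: you convert \eqref{conditionalpgf} into the two factorizations $\Phi(u,v)=\phi_Y(vs)g_\epsilon(u)=\phi_X(ut)g_\eta(v)$, eliminate the innovations to get a functional equation for the marginals alone, and solve it by taking $\log$ and the mixed partial $\partial^2/\partial u\,\partial v$, which collapses the equation to $\alpha\Theta_Y(sv)=\beta\Theta_X(ut)$ and hence to the ODE $\bigl(w\psi'(w)\bigr)'=\text{const}$, giving shifted-Poisson marginals; Raikov's theorem then kills the shifts, and the explicit Holgate bivariate Poisson handles sufficiency. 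This is essentially the machinery the paper itself deploys for Theorem 2 (the Laplace-transform/Riccati argument of Appendix A.1), specialized to Bernoulli thinning, so your argument is a natural ``Theorem 2 in miniature'' rather than an application of Theorem 1. What your route buys: it identifies the marginals and the joint law (the trivariate Poisson reduction of Remark 1) as part of the proof, it makes the sufficiency direction fully explicit (the paper is terse on this, delegating the construction to Remark 1), and it generalizes beyond Bernoulli thinning. What it costs: it leans on two nontrivial external inputs — Raikov's factorization theorem to exclude the shifts $c,c'\ge 1$, and analyticity/open-mapping reasoning to pass from the functional equation to constancy of $\Theta_X,\Theta_Y$ — whereas the paper's pmf argument is entirely elementary. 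Two small points you should tighten if this were written out in full: you need $\kappa_X,\kappa_Y\ge 0$ (immediate, since a pgf has nonnegative power-series coefficients) and the identification of $c$ as the minimum of the support of $X$ (from $\phi_X(w)\sim p_{\min}w^{\min}$ as $w\to 0$) to justify that $c,c'$ are nonnegative integers; both are routine, and the degenerate cases $\alpha\in\{0,1\}$ are glossed over equally by you and by the paper.
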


\begin{proof}
	We have:
	$$
	\ell(x|y)=\sum_{i=0}^{\min(x,y)}\binom{y}{i}\alpha^i(1-\alpha)^{y-i} g(x-i), \qquad 	\ell(y|x)=\sum_{j=0}^{\min(x,y)}\binom{x}{j}\beta^j(1-\beta)^{x-j} h(y-j),
	$$
	where $g$ and $h$ are the pmf's of $\epsilon$ and $\eta$, respectively.
	
	If $\ell(x|y)$ and $	\ell(y|x)$ are compatible with each other, then \eqref{casen2} is satisfied.  	By taking $y=0$, we get $u(x)v(0)=\frac{g(x)}{(1-\beta)^x h(0)}$, and by taking $x=0$, we get $u(0)v(y)=\frac{ (1-\alpha)^y g(0)}{h(y)}$.  Thus, we have:
	\begin{equation}
		\label{ratioequal}
		\frac{\sum_{i=0}^{\min(x,y)}\binom{y}{i}\alpha^i(1-\alpha)^{-i} g(x-i)}{g(x)}=C\cdot\frac{\sum_{j=0}^{\min(x,y)}\binom{x}{j}\beta^j(1-\beta)^{-j} h(y-j)}{h(y)},
	\end{equation}
where $C$ is a constant.   By taking $x=y=0$, we get $C=1$.

Further, by taking $x=1$, we deduce that $\frac{h(y)}{h(y-1)}= C_1/y$, where $C_1$ is a constant. Thus, $h$ is the pmf of a Poisson distribution.  Similarly, $g$ is a Poisson pmf. Denote by $\lambda_\epsilon$ and $\lambda_\eta$ the Poisson parameters of $g$ and $h$, respectively.  Then, eq.\eqref{ratioequal} becomes:
	\begin{equation*}
	{\sum_{i=0}^{\min(x,y)}\frac{x! y!}{(x-i)! (y-i)! i!}\alpha^i(1-\alpha)^{-i} }\lambda_\epsilon^{-i}=\sum_{j=0}^{\min(x,y)}\frac{x! y!}{(x-j)! (y-j)! j!}\beta^j(1-\beta)^{-j}\lambda_\eta^{-j}.
\end{equation*}
This equality holds for any $x$ and $y$ if and only if (\ref{Feb22c}) holds.
\end{proof}
\begin{rem}
The distribution of $(X,Y)$ that solves \eqref{inar2} in Corollary \ref{Feb22a} has the stochastic representation (also called trivariate Poisson reduction):
$$
\begin{bmatrix}
X \\
Y
\end{bmatrix}
 =
\begin{bmatrix}
 Z_0+Z_1 \\
 Z_0+Z_2
\end{bmatrix},
$$
where $Z_0, Z_1, Z_2$ are independent, Poisson distributed with parameters $\lambda_0$, $\lambda_1$ and  $\lambda_2$, respectively. By the well known fact that $Z_0\mid Z_1+Z_0$ (resp., $Z_0\mid Z_2+Z_0$) is binomial distributed with number of trial parameter $Z_1+Z_0$ (resp., $Z_2+Z_0$) and probability parameter $\frac{\lambda_0}{\lambda_0+\lambda_1}$
(resp., $\frac{\lambda_0}{\lambda_0+\lambda_2}$).
Then, the conditional distributions of $X\mid Y$ and $Y\mid X$ are of the form \eqref{inar2} and $\alpha$ and $\beta$ are related to the Poisson parameters given by:
\begin{equation}\label{Feb22b}
\alpha=\frac{\lambda_0}{\lambda_0+\lambda_2},\qquad \beta=\frac{\lambda_0}{\lambda_0+\lambda_1}.
\end{equation}
Note that (\ref{Feb22b}) is compatible with (\ref{Feb22c}).

%In the special case where $\lambda_1=\lambda_2$, then the joint distribution of $(X, Y)$ is symmetric in the two arguments and this conditional distribution of $Y|X$ is used in the Poisson integer-valued autoregressive process \cite{mckenzie1985some}, and it is well known that this Poisson is reversible. That is, the joint distribution of $(Y_t, Y_{t+1})$ is the same as the joint distribution of $(Y_{t+1}, Y_t)$. [\cite{gourieroux2019noncausal}].
\end{rem}

%This model has the representation $$X= Z_1+ Z_0, Y=Z_2+Z_0$$where $Z_1, Z_2, Z_3$ are independent,  Poisson distributed with parameter $\lambda \frac{\alpha}{1-\alpha}$ and $\lambda$.

We end this section by an important model that is the solution to the compound autoregressive model \eqref{inar2}.

 %ncreases.  For instance, the case $n=3$ has been analyzed by \cite{joe1996time}, who show that the conditional expectation is no longer linear.

\begin{exm}[Poisson-gamma conjugacy]
Let $U$ be gamma distributed with shape parameter $\alpha$ and rate parameter $\beta$, that is, its pdf is $\pi(u)\propto u^{\alpha-1}e^{-\beta  u}$. Assume that, given $U$, $X$ and $Y$ are conditionally independent and Poisson distributed with parameters $\lambda_1 U$ and $\lambda_2 U$, respectively. Then, we can show that $X \mid Y$ and $Y \mid X$ are both conditional negative binomial (henceforth NB).  Indeed, by the Bayes' formula, the posterior distribution of $U \mid X$ has the density:
$
\pi(u|x) =\frac{\ell(x|u) \pi(u)}{\ell(x)} \propto  e^{-\lambda_1 u} u^x  u^{\alpha-1}e^{-\beta  u},
$
which is gamma with shape parameter $\alpha+x$ and rate parameter $\lambda_1+\beta$. Then, the conditional distribution of $Y\mid X$ is has pmf:
\begin{equation*}
\int_0^\infty \Big[ e^{-\lambda_2 u } \frac{(\lambda_2 u )^y}{y!} \Big] \Big[ (\lambda_1+\beta)^{\alpha+x}\frac{u^{\alpha+x-1}e^{-(\lambda_1+\beta) u}  }{\Gamma(\alpha+x)}\Big] \mathrm{d} u= \frac{\Gamma(y+x+\alpha)}{y! \Gamma(\alpha+x)} \frac{(\lambda_1+\beta)^{\alpha+x}}{(\lambda_1+\beta+\lambda_2)^{\alpha+x+y}}\lambda_2^y,
	\end{equation*}
which is the pmf of the NB distribution with probability parameter $p_1=\frac{\lambda_1+\beta}{\lambda_1+\beta+\lambda_2}$ and number of successes parameter $r_1=\alpha+x$. Similarly, the conditional distribution of $X\mid Y$ is NB distributed with probability parameter $p_2=\frac{\lambda_2+\beta}{\lambda_1+\beta+\lambda_2}$ and number of successes parameter $r_2=\alpha+y$. As a consequence, the joint distribution $(X, Y)$ solves the compound autoregressive model \eqref{inar2}  with $Z_i$ and $W_i$ geometric distributed with probability parameters $p_2$ and $p_1$, respectively,  and $\epsilon$ and $\eta$ NB distributed with probability parameters $p_2$ and $p_1$,  and common number of successes parameter $\alpha$.
\end{exm}
We can also check that the conditional expectations are linear in the conditioning variables:
\begin{equation}
	\label{poissnogamma}
\mathbb{E}[Y|X]= \mathbb{E}[\mathbb{E}[Y|U] | X ]= \mathbb{E}[\lambda_2 U|X] =\lambda_2 \cdot\frac{\alpha+X}{\lambda_1+\beta},\ \ \ \ \ \ \mathbb{E}[X|Y]= \lambda_1\cdot \frac{\alpha+Y}{\lambda_2+\beta}.
\end{equation}
This linearity of the conditional expectation is due to the fact that $i)$ the Poisson distribution belongs to the linear exponential family; $ii)$ the distribution of $U$, which is gamma, is conjugate prior to the Poisson pmf.  It is well known that for the exponential family,  linear conditional expectation characterizes linear posterior expectation [see \cite{diaconis1979conjugate} for the proof in the continuous case and \cite{chou2001characterization} for the discrete case].

%\begin{rem} If $\lambda_1=\lambda_2$, then we can check that the joint distribution of $(X, Y)$ is reversible. This joint distribution has been used in the Markov negative binomial process in \cite{gourieroux2019negative}.
%\end{rem}

%This example can be extended to dimension $n$.  If given $U$, $X_1,X_2,...X_n$ conditionally independent and Poisson distributed with parameters $\lambda_1 U$,..., $\lambda_nU$, respectively, then the conditional distribution of $X_1| X_2,...,X_n$ is negative binomial with probability parameter $p_1=\frac{\lambda_2+\cdots+\lambda_n+ \beta }{\lambda_1+\cdots+\lambda_n+ \beta }$ and number of successes parameter $r_1=\alpha_1+X_2+\cdots + X_n$.

%Similar conjugate prior examples include, for instance,  negative binomial with beta prior.

  \section{Characterization of solutions to the compound autoregressive model}
  In the last section,  we have seen that the toy models \eqref{linearpoisson} and \eqref{inar} are quite restrictive.  This leads us to go a step further  and investigate the whole family of compound autoregressive models \eqref{inar2},  as well as its higher-dimensional extensions.
  \subsection{The bivariate case}
Let us assume that (\ref{inar2}) holds, where  $(Z_i)$ and $(W_i)$ are i.i.d. sequences but not necessarily Bernoulli distributed,  and $\epsilon$, $\eta$ are count variables. The next Theorem characterizes all the solutions to this model.

%For instance,   if $Z_i$ and $W_i$ are Bernoulli, then Corollary 2 shows that they are compatible only in the symmetric case, with $\alpha=\beta$ and $\epsilon$ and $\eta$ having identical distributions.

\begin{thm}
The joint distribution of $(X, Y)$ satisfies model \eqref{inar2}, where $Z_i$, $W_i$, $\epsilon, \eta, X, Y$ have finite moments of all integer orders, and the support of $(X, Y)$ is the entire space $\mathbb{N}_0^2$,  if and only if one of the following cases is satisfied:
\vspace{0.5em}
 
\noindent \textbf{Case} $i)$ (The model of Corollary 2).  $(Z_i)$, $(W_i)$ are Bernoulli with probability parameters $\alpha, \beta$, respectively,  and $ \epsilon,\eta$ are Poisson with parameters $ \lambda_\epsilon,\lambda_\eta$, satisfying eq.\eqref{Feb22c}.  In this case,  both $X, Y$ are also marginally Poisson, with parameters given by
 $
\lambda_X= \frac{\alpha \lambda_\eta+ \lambda_\epsilon}{1-\alpha \beta}$ and $\lambda_Y= \frac{\beta \lambda_\epsilon+ \lambda_\eta}{1-\alpha \beta}.
 $
% and
% $$
%\frac{\lambda_X}{\alpha}=\frac{\lambda_Y}{\beta}  \qquad \Longleftrightarrow \qquad
%\frac{\alpha }{\lambda_\epsilon(1-\alpha)}= \frac{\beta }{\lambda_\eta(1-\beta)}.
%$$
In this case,  the conditional expectations are:
   $$
   \mathbb{E}[X \mid Y]=\alpha Y +\lambda_\epsilon,  \qquad  \mathbb{E}[Y \mid X]= \beta X+\lambda_\eta.
   $$
\noindent \textbf{Case} $ii)$ (Extension of Example 1) The pgf's of $(Z_{i})$,  $\epsilon$,  $(W_{i})$ and $\eta$  given by:
\begin{align}
\label{zeroinflated}
&\mathbb{E}[u^{Z_{i}}]= \frac{1+\theta_2(1-u)}{1+\theta_1(1-u)}, \qquad \mathbb{E}[u^{\epsilon}]= \frac{1}{(1+\theta_1(1-u))^\delta}, \\
\label{zeroinflated2}
&\mathbb{E}[v^{W_{i}}]= \frac{1+\theta_4(1-v)}{1+\theta_3(1-v)}, \qquad \mathbb{E}[v^{\eta}]= \frac{1}{(1+\theta_3(1-v))^\delta},
\end{align}
where parameters $\delta>0$,  $\theta_1>0$, $\theta_2>-1, $ $\theta_3>0$, $\theta_4>-1$,  $\theta_1>\theta_2$, $\theta_3> \theta_4$ and satisfy the following two extra constraints:
\begin{equation}
\label{sameproduct}
\theta_4 [\theta_1+\theta_3(\theta_1-\theta_2)]= \theta_2 [\theta_3+\theta_1(\theta_3-\theta_4)],
\end{equation}
and
$$
(\theta_1-\theta_2)(\theta_3-\theta_4)<1.
$$

In this case, $X$ and $Y$ both have NB marginals with pgf's:
\begin{equation}
\label{negativebinomialmargin}
\mathbb{E}[u^X]= \frac{1}{\Big[1+ \frac{\theta_1 + \theta_3 (\theta_1-\theta_2) }{1-(\theta_1-\theta_2)(\theta_3-\theta_4)} (1-u) \Big]^\delta},  \qquad \mathbb{E}[v^Y]=\frac{1}{\Big[ 1+\frac{\theta_3 + \theta_1(\theta_3-\theta_4) }{1-(\theta_1-\theta_2)(\theta_3-\theta_4)} (1-v)\Big]^\delta},
%\frac{1-p_X}{p_X}= , \qquad \frac{1-p_Y}{p_Y}= \frac{\theta_3 + \theta_1(\theta_3-\theta_4) }{1-(\theta_1-\theta_2)(\theta_3-\theta_4)},
\end{equation}
whereas the joint pgf is equal to:
\begin{align*}
 \mathbb{E}[u^X v^Y]=\frac{1}{\Big[1+\frac{\theta_1 + \theta_3 (\theta_1-\theta_2) }{1-(\theta_1-\theta_2)(\theta_3-\theta_4)} (1-u)+\frac{\theta_3 + \theta_1(\theta_3-\theta_4) }{1-(\theta_1-\theta_2)(\theta_3-\theta_4)} (1-v)+\underbrace{ \frac{\theta_4 [\theta_1+\theta_3(\theta_1-\theta_2)]}{1-(\theta_1-\theta_2)(\theta_3-\theta_4)}}_{=\frac{ \theta_2 [\theta_3+\theta_1(\theta_3-\theta_4)]}{1-(\theta_1-\theta_2)(\theta_3-\theta_4)}}  (1-u)(1-v) \Big]^\delta}.
\end{align*}
  The conditional expectations are:
\begin{equation}
\label{conditionalexpectation}
\mathbb{E}[X\mid Y]= (\theta_1-\theta_2)Y   + \delta \theta_1, \qquad \mathbb{E}[Y\mid X]= (\theta_3-\theta_4)X   + \delta \theta_3.
\end{equation}

 In the special case where $\theta_2=\theta_4=0$, we recover Example 1.  Thus, this corresponds to an extension of Example 1.

\end{thm}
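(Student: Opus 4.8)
The plan is to recast the model through probability generating functions. Write $G(u)=\mathbb{E}[u^{Z_1}]$, $A(u)=\mathbb{E}[u^{\epsilon}]$, $H(v)=\mathbb{E}[v^{W_1}]$, $B(v)=\mathbb{E}[v^{\eta}]$ for the building-block pgf's and $\Phi_X,\Phi_Y$ for the marginal pgf's of $X,Y$. Conditioning and using \eqref{conditionalpgf}, the joint pgf $\Phi(u,v)=\mathbb{E}[u^Xv^Y]$ admits the two representations $\Phi(u,v)=A(u)\,\Phi_Y(vG(u))=B(v)\,\Phi_X(uH(v))$, and compatibility of the model is exactly the requirement that these coincide on $[0,1]^2$. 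The hypotheses enter precisely here: full support on $\mathbb{N}_0^2$ makes $\Phi$ strictly positive on $(0,1)^2$ so that $\log\Phi$ is well defined, and finiteness of all integer moments lets me differentiate the pgf's arbitrarily often. The \emph{if} direction I would settle by direct substitution of \eqref{zeroinflated}--\eqref{zeroinflated2} into both representations: a computation shows the factor $(1+\theta_1(1-u))^{\delta}$ cancels and reproduces the stated joint pgf, the two expressions agreeing exactly because of \eqref{sameproduct}, while $(\theta_1-\theta_2)(\theta_3-\theta_4)<1$ makes the denominator a legitimate (nonnegative-coefficient) NB-type pgf.

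For the \emph{only if} direction, I would first differentiate the logarithm of each representation. Setting $P=\partial_u\log\Phi$, $Q=\partial_v\log\Phi$, $p=(\log G)'$, $q=(\log H)'$, the two representations yield the identities
\[ P - v\,p(u)\,Q = (\log A)'(u), \qquad Q - u\,q(v)\,P = (\log B)'(v), \]
valid for all $(u,v)$. Solving this linear system gives
\[ P=\frac{(\log A)'(u)+v\,p(u)\,(\log B)'(v)}{1-uv\,p(u)q(v)}, \qquad Q=\frac{(\log B)'(v)+u\,q(v)\,(\log A)'(u)}{1-uv\,p(u)q(v)}. \]
Compatibility is then equivalent to exactness of this vector field (plus the normalization $\log\Phi(1,1)=0$ and validity of the recovered $\Phi$ as a pgf), i.e. to the single scalar identity $\partial_v P=\partial_u Q$ on $(0,1)^2$; this is the workhorse equation.

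The main work, and the main obstacle, is to extract from $\partial_v P=\partial_u Q$ a complete list of admissible $A,B,G,H$. My plan is to expand this identity as a power series in $(1-u)$ and $(1-v)$ about $u=v=1$ (where $p(1)=\mathbb{E}[Z_1]$, $q(1)=\mathbb{E}[W_1]$, and the denominator tends to $1-\mathbb{E}[Z_1]\mathbb{E}[W_1]$, forcing $\mathbb{E}[Z_1]\mathbb{E}[W_1]<1$ for integrability) and to read off the resulting hierarchy of polynomial relations among the Taylor coefficients of $p,q,(\log A)',(\log B)'$. I expect this hierarchy to be rigid: at low order it should pin $(\log A)'$ and $(\log B)'$ to be either \emph{constant} --- which integrates to Poisson $\epsilon,\eta$ and, with the companion relations for $p,q$, to Bernoulli $Z_i,W_i$, i.e. Case $i)$ with \eqref{Feb22c} --- or \emph{fractional-linear}, of the form $\delta\theta_1/(1+\theta_1(1-u))$, which integrates to the NB pgf's of \eqref{zeroinflated}--\eqref{zeroinflated2}, i.e. Case $ii)$. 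The two families thus arise from a genuine either/or branching in the functional equation, and I anticipate the hardest point to be showing that no intermediate solutions survive --- that the low-order constraints already force the global functional form and exclude every other pair of pgf's consistent with the linear conditional expectations \eqref{conditionalexpectation}.

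Finally, once $A,B,G,H$ are fixed, I would recover the remaining claims by substitution: plugging the solved forms into $\Phi(u,v)=A(u)\Phi_Y(vG(u))$ gives the explicit joint pgf, the restrictions $v=1$ and $u=1$ give the Poisson/NB marginals, and differentiating at $u=v=1$ reproduces the conditional expectations. The admissibility constraints \eqref{sameproduct} (Case $ii)$) and \eqref{Feb22c} (Case $i)$) would be identified as exactly the conditions under which the two representations agree, equivalently under which the cross term $(1-u)(1-v)$ has a consistent coefficient, while $(\theta_1-\theta_2)(\theta_3-\theta_4)<1$ is the condition ensuring the resulting joint pgf has nonnegative coefficients and corresponds to a genuine distribution on $\mathbb{N}_0^2$.
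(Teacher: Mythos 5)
Your setup is sound and is in fact a log-derivative version of what the paper does: the two representations $\Phi(u,v)=A(u)\,\Phi_Y(vG(u))=B(v)\,\Phi_X(uH(v))$ are exactly the paper's pair of joint Laplace-transform identities, and your linear system for $P,Q$ is a legitimate reformulation of the compatibility requirement (one can check via characteristics that the two first-order PDEs are equivalent to the two representations). The ``if'' direction by substitution, and the derivation of the marginals, conditional expectations, and the constraints \eqref{sameproduct}, \eqref{Feb22c} by coefficient matching, are all fine and mirror the end of the paper's proof.

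The genuine gap is in the ``only if'' direction, which is the entire content of the theorem: you never actually extract the two families from the exactness equation $\partial_v P=\partial_u Q$. Your plan --- expand around $(1,1)$, read off a ``hierarchy of polynomial relations,'' and hope it is ``rigid'' --- is stated as an expectation (``I expect this hierarchy to be rigid,'' ``I anticipate the hardest point to be showing that no intermediate solutions survive''), not as an argument; but ruling out all intermediate solutions is precisely the theorem. A low-order Taylor expansion by itself cannot do this: matching finitely many coefficients yields finitely many polynomial constraints, and you need a mechanism that upgrades these to a \emph{global} functional form for $(\log A)'$ and $(\log B)'$. The paper supplies exactly this mechanism: after eliminating $b(u)$ and $d(v)$ via the marginal relations, it matches the coefficients of $v$ and $v^2$ only, introduces the composed functions $\gamma_1=D_2''\circ(D_2')^{-1}$ and $\gamma_2=D_1''\circ(D_1')^{-1}$, and shows they satisfy affine intertwining equations of the form $\gamma_1[\alpha_1 w+\alpha_2]=\alpha_1^2 w+\alpha_3\gamma_2(w)+\alpha_4$ (and symmetrically). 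Combining the two forces $\gamma_1$ (hence $\gamma_2$) to be quadratic, so the marginal log-transforms solve a Riccati ODE $D_2''=k_0+k_1D_2'+k_2(D_2')^2$, whose count-supported solutions are known (cited from the literature) to be exactly Poisson or negative binomial; the Bernoulli/NB forms of $Z_i,W_i$ and the parameter constraints then follow. Without an analogue of this reduction-to-an-ODE step (or some other rigidity argument), your proposal establishes necessity of nothing beyond the functional equation itself, so the classification --- the heart of the statement --- remains unproven.
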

\begin{proof}
See Appendix \ref{proofprop2}.
\end{proof}

  %This conditional expectation is such that the regression coefficients $\theta_1-\theta_2$ and $\theta_3-\theta_4$ can take potentially values that are larger than 1 so long as their product satisfies condition \eqref{onlyproduct}.

\begin{rem}
 If the distributions of $X$ and $Y$ are identical, then the joint distribution of $(X, Y)$ defines a stationary Markov chain with stationary distribution equal to the marginal distribution of $X$ and $Y$, and transition distribution equal to the conditional distribution of $Y \mid X$.  Moreover, by eq.\eqref{conditionalpgf},  this Markov process $(Y_t)$ is such that:
$$
\mathbb{E}[v^{Y_{t+1}}|Y_t]= \Big(\mathbb{E}[v^{Z_1}]\Big)^{Y_t} \mathbb{E}[v^\eta], \qquad \mathbb{E}[v^{Y_{t}}|Y_{t+1}]= \Big(\mathbb{E}[v^{W_1}]\Big)^{Y_t} \mathbb{E}[v^\epsilon].
$$
Such a process is called compound autoregressive, or affine \citep{darolles2006structural}.  The set of Markov count processes that are compound autoregressive in both directions have been characterized in the literature. More precisely,  \cite[Proposition 2.8]{gourieroux2023noncausal} show that if a count valued stationary Markov process is compound autoregressive in both directions, then it is time reversible.  Thus, the set of stationary, Markov compound autoregressive and reversible processes has been characterized by \cite{gourieroux2023noncausal}, who show that under the assumption that if the process is unbounded, then its conditional distribution in calendar time is necessarily of one of the forms described in Theorem 2. Therefore, our result is an extension of these results, by relaxing the restriction of identical margins for $X$ and $Y$.
%\begin{prop}[\cite{gourieroux2023noncausal}, Proposition 1]
%	A Markov count process that is time reversible, with the unbounded domain $\mathbb{N}$, and compound autoregressive, i.e.
%	$$
%	Y_{t+1} = \sum_{i=0}^{Y_t} Z_{i,t}+\epsilon_t
%	$$
%	if and only if it is:
%	\begin{itemize}
%		\item either the Poisson INAR(1) process, that is:
%		$Z_{i,t}$ are i.i.d. Bernoulli and $\epsilon_t$ are i.i.d. Poisson
%		\item or a process with negative binomial marginal distribution, with the p.g.f. of $Z_{i,t}$ and $\epsilon_t$ given by:
%		$$
%		\mathbb{E}[u^{Z_{i,t}}]= \frac{1+\theta_2(1-u)}{1+\theta_1(1-u)}, \qquad \mathbb{E}[u^{\epsilon_t}]= \frac{1}{(1+\theta_1(1-u))^\delta},
%		$$
%		where parameters $\delta>0$,  $\theta_1>0$, $\theta_2>-1, $ and $0< \theta_1-\theta_2<1$.  Then the marginal distribution is characterized by:
%		$$
%		\mathbb{E}[u^{Y_t}]=\frac{1}{\Big[1+ \frac{\theta_1}{1-(\theta_1-\theta_2)}\Big]^\delta}
%		$$
%	\end{itemize}
%\end{prop}

%In other words, we get a count valued stationary Markov process whose conditional expectation is compound autoregressive
\end{rem}
\begin{rem}
The distribution given by \eqref{zeroinflated} can be viewed as an extension of the NB distribution (which arises if $\theta_2=0$).  If $\theta_2$ is positive,  then this distribution is a zero-inflated geometric distribution since
$$
\frac{1+\theta_2(1-u)}{1+\theta_1(1-u)}= \left(1-\frac{\theta_2}{\theta_1}\right)+\frac{\theta_2}{\theta_1}\cdot \frac{1}{1+\theta_1(1-u)}.
$$
If $\theta_2$ is negative, \textit{i.e.},  $\theta_2 \in (-1,0)$,  then
$1+\theta_2(1-u)= (1+\theta_2)+(-\theta_2) u$ is the pgf of the Bernoulli distribution with probability parameter $-\theta_2$.  Thus,  the distribution given by \eqref{zeroinflated} is the convolution of a geometric distribution and a Bernoulli distribution.

Moreover,  eq.\eqref{sameproduct} says that if $\theta_2=0$ then $\theta_4=0$ and vice versa,  since in \eqref{sameproduct}, the other two terms $\theta_1+\theta_3(\theta_1-\theta_2)$ and $\theta_3+\theta_1(\theta_3-\theta_4)$ are positive.  Similarly,  if $\theta_2>0$ (resp.,  $\theta_2<0$),  then we conclude that $\theta_4>0$ (resp.,  $\theta_4<0$) as well.  Thus, the distributions of $Z_i$ and $W_i$ are always of the same type (NB,  zero-inflated NB, or convolution between Bernoulli and NB).
\end{rem}

\begin{rem}[Interpretation using Poisson-gamma conjugacy]
If $\theta_2=\theta_4=0$, then %the conditional distributions of $X \mid Y$ and $Y \mid X$ are both negative binomial, with number of successes parameters $\delta+ Y$ and $\delta+X$, respectively.  Thus,
we recover Example 1, in which $X, Y$ are conditionally Poisson given $U$,  with $U$ following the gamma distribution.
% If $p_1=p_3,$ then by \eqref{conditionnew} we deduce that $p_2=p_4$.  Then the conditional distributions of $X \mid Y$ and $Y \mid X$ are both negative binomial, with number of successes parameters $\alpha+ Y$ and $\alpha+X$, respectively,  where $\alpha=r_1=r_2$.  Thus we recover Example 1.  In other words, the compatible model we just worked out is an extension of the Bayesian model of Example 1.  This extension has the advantage that the conditional expectations are:
%$$
%\mathbb{E}[X \mid Y]= r_1 \frac{1-p_1}{p_1} Y + \frac{1-p_3}{p_3}, \qquad  \mathbb{E}[Y \mid X]= r_2 \frac{1-p_2}{p_2} Y + \frac{1-p_4}{p_4}
%$$

The case where $\theta_2>0$ (and hence $\theta_4>0$) can be interpreted as follows.  Let us consider the (Markov) chain:
\begin{equation}
\label{causalchain}
X \rightarrow U_1 \rightarrow N \rightarrow U_2 \rightarrow Y.
\end{equation}
In this chain, the conditional distribution of one variable given all previous ones only depends on the immediate previous one, with:
\begin{itemize}
\item given $X$, the continuous variable $U_1$ follows the gamma distribution with shape parameter $\delta+ X$ and rate parameter $\beta_1$,
\item given $U_1$, the count variable $N$ follows the Poisson distribution with parameter $\beta_2 U_1$,
\item given $N$,  the continuous variable $U_2$ follows the gamma distribution with shape parameter $\delta+ N$ and rate parameter $\beta_3$,
\item given $U_2$,  the count variable $Y$ follows the Poisson distribution with parameter $\beta_4 U_2$.
\end{itemize}
Finally,  we assume that the marginal distribution of $U_1$ is  with shape parameter $\delta$ and rate parameter $\beta_0< \beta_1$.  By the Bayes' formula, this implies that $X \mid U_1$ is Poisson with parameter $(\beta_1-\beta_0)  U_1$. Thus, the conditional distributions alternate between the pair of conditional distributions of $(X_1, U)$, which are Poisson and gamma.   This chain structure has been also considered in a time series setting by \cite{pitt2002constructing, pitt2005constructing,  gourieroux2006autoregressive,  gourieroux2019negative}.

Let us check that under model \eqref{causalchain}, the conditional distributions of $X \mid Y$ and $Y \mid X$ are indeed of the form \eqref{zeroinflated} and \eqref{zeroinflated2}.  First, $N$ is NB as a mixed Poisson with gamma mixing variable,  with pmf: $$
\mathbb{P}[N=n]= \frac{\Gamma(n+\delta)}{n! \Gamma(\delta)} \frac{\beta_2^n \beta_0^\delta}{(\beta_2 +\beta_0)^{n+\delta}}= \binom{n+\delta-1}{n} \frac{\beta_2^n \beta_0^\delta}{(\beta_2 +\beta_0)^{n+\delta}}, \qquad \forall n \geq 0.
$$
Then the joint pmf/density of $(N, U_2)$ is:
\begin{equation}
\label{u2n}
f(u_2, n)= \frac{\Gamma(n+\delta)}{n! \Gamma(\delta)} \frac{\beta_2^n \beta_0^\delta}{(\beta_2 +\beta_0)^{n+\delta}} \frac{\beta_3^{\delta+n}}{\Gamma(\delta+n)} u_2^{\delta+n-1}e^{-\beta_3 u_2}, \qquad \forall u \in \mathbb{R}_{>0}, \forall n \in \mathbb{N}.
\end{equation}
Thus, by the Bayes' formula,  the conditional pmf $f(n \mid u_2)$ is the pmf of the Poisson distribution with parameter $\frac{\beta_2 \beta_3}{\beta_0+\beta_2}u_2$.  Hence, the marginal density of $U_2$ is obtained by summing \eqref{u2n} with respect to $n$:
$$
f(u_2)=\sum_{n=0}^\infty f(u_2, n) \propto  u_2^{\delta-1}e^{-\beta_3 u_2} e^{\frac{\beta_2 \beta_3}{\beta_0+\beta_2}u_2},
$$
which is gamma with shape parameter $\delta$ and rate parameter $\frac{\beta_0 \beta_3}{\beta_0+\beta_2}$.  We conclude, by Poisson-gamma conjugacy,  that the marginal distribution of $Y$ is NB and the conditional distribution of $U_2$ given $Y$ is gamma with shape parameter $\delta+Y$ and rate parameter $\frac{\beta_0 \beta_3}{\beta_0+\beta_2}+ \beta_4$.

As a consequence, eq.\eqref{causalchain} becomes:
\begin{align*}
&X\qquad  \underrightarrow{gamma} \qquad U_1 \qquad \underrightarrow{Poisson}  \qquad N \qquad \underrightarrow {gamma} \qquad  U_2 \qquad \underrightarrow{Poisson} \qquad Y \\
&X\qquad  \overleftarrow{Poisson} \qquad U_1 \qquad \overleftarrow{gamma}  \qquad  N \qquad \overleftarrow {Poisson} \qquad  U_2 \qquad \overleftarrow{gamma} \qquad Y.
\end{align*}

Let us now compute the conditional pgf of $Y$ given $X$.  By iterated expectations, we get:
\begin{align*}
\mathbb{E}[v^Y\mid X] &= \mathbb{E}\Big[\mathbb{E}[v^Y\mid U_2, X ] \mid X\Big] \\
&=\mathbb{E}[e^{\beta_4U_2 (v-1)} \mid X] =\mathbb{E}\Big[\mathbb{E}[e^{\beta_4U_2 (v-1)} \mid N] \mid X\Big]\\
&=\mathbb{E}\left[\left.\frac{1}{[1+\frac{\beta_4}{\beta_3}(1-v)]^{\delta+N}} \right| X\right]= \mathbb{E}\left[ \mathbb{E}\left[\left.\left.\frac{1}{[1+\frac{\beta_4}{\beta_3}(1-v)]^{\delta+N}}\right| U_1 \right]\right| X\right]\\
&= \frac{1}{[1+\frac{\beta_4}{\beta_3}(1-v)]^{\delta }} \mathbb{E}\left.\left[\exp \left[\beta_2 U_1 \left(\frac{1}{1+\frac{\beta_4}{\beta_3}(1-v)}-1\right)\right] \right| X\right]\\
&=  \frac{1}{[1+\frac{\beta_4}{\beta_3}(1-v)]^{\delta }}  \cdot\frac{1}{\Big(1+\frac{\frac{\beta_2}{\beta_1}\frac{\beta_4}{\beta_3}(1-v)}{1+\frac{\beta_4}{\beta_3}(1-v)} \Big)^{\delta+X}}.
\end{align*}
Matching with \eqref{zeroinflated}, we get:
$$
\theta_1= \frac{\beta_2}{\beta_1}\cdot\frac{\beta_4}{\beta_3}+ \frac{\beta_4}{\beta_3},\ \ \ \  \theta_2= \frac{\beta_4}{\beta_3}.
$$
Similarly,  by computing $\mathbb{E}[u^ X \mid Y]$ using iterated expectations,  we get a similar expression.
\end{rem}

%It can be checked that for any $a, b, c, d$ positive and $ac <1$,  we can find $p_1,p_2, p_3, p_4, r_1, r_2$ satisfying \eqref{conditionnew} :
%$$
%a=r_1 \frac{1-p_1}{p_1} , b=  \frac{1-p_3}{p_3},  c= r_2 \frac{1-p_2}{p_2}, d=\frac{1-p_4}{p_4}.
%$$
%Thus this answers Question 1 affirmative.

 \subsection{The higher-dimensional case}

Let us now consider the three-dimensional compound autoregressive model:
 \begin{align}
X\mid Y, Z&=_{(d)} \sum_{i=1}^ Y W_{1i} + \sum_{i=1}^ Z W_{2i} + \epsilon_{1},   \label{car1}\\
Y\mid X, Z&=_{(d)} \sum_{i=1}^ X W_{3i} + \sum_{i=1}^ Z W_{4i} + \epsilon_{2} , \label{car2}\\
Z\mid X, Y&=_{(d)} \sum_{i=1}^ X W_{5i} + \sum_{i=1}^ Y W_{6i} + \epsilon_{3},  \label{car3}
 \end{align}
  where
\begin{itemize}
\item   $(W_{1i})$,  $(W_{2i})$ are i.i.d.  mutually independent,  and independent of $Y, Z, $ and $\epsilon_{1}$,
\item   $(W_{3i})$,  $(W_{4i})$ are i.i.d.  mutually independent,  and independent of $X, Z, $ and $\epsilon_{2}$,
\item   $(W_{5i})$,  $(W_{6i})$ are i.i.d.  mutually independent,  and independent of $X,Y, $ and $\epsilon_{3}$,
\item $\epsilon_{1}$ is independent of $Y, Z$,  $\epsilon_{2}$ is independent of $X, Z$ and $\epsilon_{3}$ is independent of $X, Y$.
\end{itemize}
In other words, model \eqref{car1}-\eqref{car3} can be equivalently written using the conditional pgf's:
\begin{align*}
\mathbb{E}[u^X \mid Y, Z] &= \Big(\mathbb{E}[u^{W_{11}}]\Big)^Y \Big(\mathbb{E}[u^{W_{21}}]\Big)^Z  \mathbb{E}[u^{\epsilon_1}], \\
 \mathbb{E}[v^Y \mid X, Z] &= \Big(\mathbb{E}[v^{W_{31}}]\Big)^X \Big(\mathbb{E}[v^{W_{41}}]\Big)^Z  \mathbb{E}[v^{\epsilon_2}], \\
 \mathbb{E}[t^Z \mid X, Y] &= \Big(\mathbb{E}[t^{W_{51}}]\Big)^X \Big(\mathbb{E}[t^{W_{61}}]\Big)^Y  \mathbb{E}[t^{\epsilon_3}].
\end{align*}

\begin{exm}
	Let us now show that \eqref{car1}-\eqref{car3} (and hence \eqref{pm1}-\eqref{pm3} below) has a solution by extending Example 1. Suppose that $X, Y, Z$ are conditionally Poisson with parameters $\lambda_1 U, \lambda_2U, \lambda_3U$, respectively, where $U$ follows the gamma distribution. Then, the conditional distribution of $X$ given $Y, Z$ is NB distributed with probability parameter $p_1= \frac{\lambda_2+\lambda_3 +
	\beta}{\lambda+1+\lambda_2+\lambda_3+\beta}$, and number of successes parameter $\alpha+y + z$.  Thus, in \eqref{car1}, both $W_{1i}$ and $W_{2i}$ are geometric with probability $p_1$.
\end{exm}

The next theorem says that Example 2 is indeed the unique solution to the three-dimensional compound autoregressive model.
\begin{thm}
The only non-degenerate solution to \eqref{car1}-\eqref{car3}  is given by Example 3.   By
non-degenerate we mean that $W_{1i}, ...,W_{6i}$ should not be constant variables such as zero.
\end{thm}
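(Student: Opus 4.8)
The plan is to reduce the three-dimensional statement to the bivariate characterization of Theorem 2 by conditioning on one coordinate at a time. Fix $Z=z$. Reading off the factorizations of the conditional pgf's, the law of $(X,Y)$ given $Z=z$ is again a bivariate compound autoregressive model of the form \eqref{conditionalpgf}: the factor attached to $Y$ in the $X$-equation is still $\mathbb{E}[u^{W_{11}}]$, while the noise pgf becomes $\big(\mathbb{E}[u^{W_{21}}]\big)^{z}\,\mathbb{E}[u^{\epsilon_1}]$, and symmetrically for the $Y$-equation. Since $(W_{1i})$ is independent of $(W_{2i})$ and of $\epsilon_1$, this is a genuine instance of \eqref{inar2} obeying the full-support and finite-moment hypotheses, so Theorem 2 applies and places $(X,Y)\mid Z=z$ in Case $i)$ or Case $ii)$ for every $z$; the same holds, cyclically, for $(X,Z)\mid Y=y$ and $(Y,Z)\mid X=x$.

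The crux is that each multiplier pgf plays two different roles. The factor $\mathbb{E}[u^{W_{21}}]$ sits inside the noise when we condition on $Z$, but becomes a primary (``$Z_i$''-type) multiplier when we condition on $Y$, since then the $X$-equation reads $\mathbb{E}[u^{X}\mid Y=y,Z]=\big(\mathbb{E}[u^{W_{11}}]\big)^{y}\mathbb{E}[u^{\epsilon_1}]\cdot\big(\mathbb{E}[u^{W_{21}}]\big)^{Z}$. I would first rule out Case $i)$ with this observation: if $(X,Y)\mid Z=z$ is in Case $i)$ then the noise $\big(\mathbb{E}[u^{W_{21}}]\big)^{z}\mathbb{E}[u^{\epsilon_1}]$ must be Poisson for every $z$, which forces both $\mathbb{E}[u^{\epsilon_1}]$ and $\mathbb{E}[u^{W_{21}}]$ to be Poisson pgf's; but Theorem 2 simultaneously requires the primary multiplier $\mathbb{E}[u^{W_{21}}]$ of the pair $(X,Z)\mid Y=y$ to be Bernoulli or zero-inflated geometric, and a Poisson pgf is neither unless it is degenerate. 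This contradicts the non-degeneracy of $W_{21}$, so Case $i)$ cannot occur in three dimensions.

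In Case $ii)$, Theorem 2 forces the noise pgf to be $\big(1+\theta_1(1-u)\big)^{-\delta}$ sharing the denominator parameter $\theta_1$ of the multiplier $\mathbb{E}[u^{W_{11}}]=\frac{1+\theta_2(1-u)}{1+\theta_1(1-u)}$. Requiring $\big(\mathbb{E}[u^{W_{21}}]\big)^{z}\mathbb{E}[u^{\epsilon_1}]=\big(1+\theta_1(1-u)\big)^{-\delta_z}$ for all $z$ then forces $\mathbb{E}[u^{\epsilon_1}]=\big(1+\theta_1(1-u)\big)^{-\delta_0}$ and $\mathbb{E}[u^{W_{21}}]=\big(1+\theta_1(1-u)\big)^{-\gamma}$. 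Invoking the second role, when we condition on $Y$ the factor $\mathbb{E}[u^{W_{21}}]$ must be a zero-inflated geometric pgf, hence a ratio of two degree-one polynomials; since such a ratio has only a simple pole, matching it with $\big(1+\theta_1(1-u)\big)^{-\gamma}$ forces $\gamma=1$, so $\mathbb{E}[u^{W_{21}}]=\big(1+\theta_1(1-u)\big)^{-1}$ is pure geometric. Exchanging the roles of $W_{11}$ and $W_{21}$ gives $\theta_2=0$ (consistent with Remark 5), so $\mathbb{E}[u^{W_{11}}]=\mathbb{E}[u^{W_{21}}]=\big(1+\theta_1(1-u)\big)^{-1}$, and by the cyclic symmetry the same holds in the $Y$- and $Z$-equations. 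Consequently every conditional law is negative binomial, $X\mid Y,Z\sim \mathrm{NB}(\delta_1+Y+Z,\,p_1)$ and cyclically, exactly the conditionals produced by the Poisson--gamma mixture.

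It then remains to reconstruct the joint law and identify it with Example 3. I would substitute the now-known forms into the identity $\mathbb{E}[u^{X}v^{Y}t^{Z}]=\mathbb{E}[u^{\epsilon_1}]\,\mathbb{E}\big[(v\,\mathbb{E}[u^{W_{11}}])^{Y}(t\,\mathbb{E}[u^{W_{21}}])^{Z}\big]$ obtained by taking expectations in the $X$-equation, together with its two cyclic analogues, and verify that the unique joint pgf solving all three is $\big(1+a(1-u)+b(1-v)+c(1-t)\big)^{-\delta}$, which is precisely the Poisson--gamma pgf; matching coefficients forces the three base shapes to collapse to a common $\delta$ and expresses the remaining parameters as the Poisson--gamma ratios, recovering Example 3. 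I expect the main obstacle to be the double-role step: proving that a factor constrained to be the power $\big(1+\theta_1(1-u)\big)^{-\gamma}$ under one conditioning must at the same time be a ratio of linear polynomials under another, since it is precisely this singularity collision that annihilates all zero-inflation and the non-unit shapes and collapses the entire family onto the single Poisson--gamma solution; the Poisson-versus-Bernoulli incompatibility that eliminates Case $i)$ is the other point requiring care.
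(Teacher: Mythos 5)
Your proposal is correct and takes essentially the same route as the paper: reduce to the bivariate Theorem 2 by conditioning on fixed values of the third coordinate, eliminate the Poisson/Bernoulli case using non-degeneracy, and collapse the NB case to geometric multipliers with NB noise, which yields exactly the Poisson--gamma conditionals of the stated example. The only mechanical difference is the exclusion step: the paper applies Theorem 2 to the pair given $Z=1$ and notes that the composite noise (Poisson times Bernoulli, or NB times the ratio $\frac{1+\lambda_2(1-u)}{1+\theta_1(1-u)}$) cannot be a legitimate Poisson or NB noise unless $a_2=0$ or $\lambda_2=0$, whereas you pin down $\mathbb{E}[u^{W_{21}}]$ from its noise role (Poisson, resp.\ a power $(1+\theta_1(1-u))^{-\gamma}$) and contradict its multiplier role in the cross-pair $(X,Z)\mid Y$ by a pole-order argument---a cosmetic variation that produces the same constraints.
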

\begin{proof}
We choose $Z=0$ in eqs.\eqref{car1} and \eqref{car2}, and take marginal expectations on all sides.  We get:
\begin{align*}
\mathbb{E}[u^X \mid Y, Z=0] &= \Big(\mathbb{E}[u^{W_{11}}]\Big)^Y   \mathbb{E}[u^{\epsilon_1}], \\
 \mathbb{E}[v^Y \mid X, Z=0] &= \Big(\mathbb{E}[v^{W_{31}}]\Big)^X    \mathbb{E}[v^{\epsilon_2}].
\end{align*}
Then we can apply Theorem 2 to the joint distribution of $(X,Y)$ given $Z=0$,  and conclude that one of the two following cases arises:
\begin{itemize}
\item either $W_{1i}, W_{3i}$ are  Bernoulli with probability parameters  and $\epsilon_1$, $\epsilon_2$ are Poisson,
\item or the pgf's of $W_{1i}, W_{3i}$ are given by eqs.\eqref{zeroinflated} and \eqref{zeroinflated2}, and $\epsilon_1, \epsilon_2$ are NB.
\end{itemize}
Similarly,  we can choose $Y=0$ in eqs.\eqref{car1} and \eqref{car3} and conclude that $W_{2i}, W_{6i}$, $\epsilon_1, \epsilon_3$ satisfy one of the two conditions of Theorem 2.

Thus, $\epsilon_1, \epsilon_2, \epsilon_3$ are either all Poisson, or all NB.  In the first case,  $W_{11}, ...,W_{61}$ are all Bernoulli; whereas in the second case,  the pgf's of $W_{11}, ...,W_{61}$ are all of the form $\frac{1+\theta_{2j}(1-u)}{1+\theta_{1j}(1-u)}, j=1,...,6$. Let us now discuss these two cases.

\paragraph{Poisson case.}  We denote by $\lambda_i,i=1,2,3$, the Poisson parameters of $\epsilon_i$, and $a_i,  i=1,...,6$, the probability parameters of the Bernoulli sequences $(W_{ij})$, respectively.  Then, we have:
\begin{align}
 &   \mathbb{E}[u^X \mid Y, Z]  = e^{\lambda_1(u-1)}    (a_1 u+1-a_1) ^Y (a_2 u+1-a_2) ^Z, \label{usingu} \\
 & \mathbb{E}[v^Y \mid X, Z]  = e^{\lambda_2(v-1)}  (a_3 v+1-a_3) ^X  (a_4 v+1-a_4) ^Z.\nonumber % \\
% & \mathbb{E}[t^Z \mid X, Y]  = e^{\lambda_3(t-1)}  (a_5 t+1-a_5) ^X  (a_6 t+1-a_6) ^Y. \nonumber
\end{align}
Let us now take $Z=1$ in \eqref{usingu},  and consider the joint distribution of $(X,  Y)$ given $Z=1$.  We get:
\begin{align*}
\mathbb{E}[u^X \mid Y, Z=1] &  = e^{\lambda_1(u-1)}   (a_2 u+1-a_2)   (a_1 u+1-a_1) ^Y,  \\
\mathbb{E}[v^Y \mid X, Z=1] & = e^{\lambda_2(v-1)} (a_4 v+1-a_4)    (a_3 v+1-a_3) ^X.
\end{align*}
This is once again a model of form \eqref{inar2}.  Hence, we can apply Theorem 2 to this joint distribution of $(X,  Y)$ given $Z=1$. We conclude that $ e^{\lambda_1(u-1)}   (a_2 u+1-a_2) $ should either be a Poisson or a NB pgf.  Thus, the only possibility is that $a_2=0$.  Similarly,  $a_4=0$ and $a_1=a_3=a_5=a_6=0$.

\paragraph{NB case.} Let us consider the case:
\begin{align*}
&\mathbb{E}[u^{W_{11}}]= \frac{1+\lambda_1 (1-u)}{1+\theta_1 (1-u)}, \quad \mathbb{E}[u^{W_{21}}]= \frac{1+\lambda_2 (1-u)}{1+\theta_1 (1-u)}, \quad \mathbb{E}[u^{\epsilon_1}]= \frac{1}{[1+\theta_1 (1-u)]^\delta}, \\
&\mathbb{E}[v^{W_{31}}]= \frac{1+\lambda_3 (1-v)}{1+\theta_2 (1-v)}, \quad \mathbb{E}[v^{W_{41}}]= \frac{1+\lambda_4 (1-v)}{1+\theta_2 (1-v)}, \quad \mathbb{E}[v^{\epsilon_2}]= \frac{1}{[1+\theta_2 (1-v)]^\delta}, \\
&\mathbb{E}[t^{W_{51}}]= \frac{1+\lambda_5(1-t)}{1+\theta_3 (1-t)}, \quad \mathbb{E}[t^{W_{61}}]= \frac{1+\lambda_6 (1-u)}{1+\theta_3 (1-u)}, \quad \mathbb{E}[t^{\epsilon_3}]= \frac{1}{[1+\theta_3 (1-t)]^\delta}.
\end{align*}

 Similarly,  by considering the joint distribution of $(X, Y)$ given $Z=1$ we get:
 \begin{align*}
\mathbb{E}[u^X \mid Y, Z=1] &  = \frac{1}{[1+\theta_1 (1-u)]^\delta}\cdot  \frac{1+\lambda_2 (1-u)}{1+\theta_1 (1-u)}  \Big(\frac{1+\lambda_1 (1-u)}{1+\theta_1 (1-u)}\Big) ^Y,  \\
\mathbb{E}[v^Y \mid X, Z=1] & = \frac{1}{[1+\theta_2 (1-v)]^\delta}\cdot \frac{1+\lambda_4 (1-v)}{1+\theta_2 (1-v)}      \Big(\frac{1+\lambda_3 (1-v)}{1+\theta_2 (1-v)}\Big) ^X .
\end{align*}
By Theorem 2,  we conclude that $\frac{1}{[1+\theta_1 (1-u)]^\delta} \cdot \frac{1+\lambda_2 (1-u)}{1+\theta_1 (1-u)} $ is either a Poisson or a NB pgf.  This is only possible when $\lambda_2=0$.  Similarly, $\lambda_i=0$ for any $i=1,...,6$.   Therefore, we recover Example 3.

\end{proof}

\begin{rem}
It is easily shown that if model \eqref{car1}-\eqref{car3} is extended to arbitrary dimension $n$, then the unique non-degenerate solution is given by the Poisson-gamma conjugacy model described in Example 3.  That is,  given $U$,  counts $X_1,...,X_n$ are independent Poisson distributed with parameters $\lambda_i U$, $i=1,...,n$, respectively,  and $U$ is gamma distributed.
\end{rem}

\section{Characterization of solutions to the random coefficient model}
\subsection{The bivariate case}
% We say that a Markov process is random coefficient autoregressive if
% $$
% Y_{t+1} = \sum_{i=0}^{Y_t} Z_{i,t}+\epsilon_t
% $$
% where $(Z_{i,t}, i=1,...)$ are i.i.d.  Bernoulli given a stochastic probability parameter $p_t$.  Moreover, $(p_t)$ is i.i.d. across $t$.

Let us consider model (\ref{inar2}), where $(Z_i)$ is conditionally i.i.d. given probability parameters $p$ and independent of $\epsilon$, $(W_i)$ is conditionally i.i.d. given probability parameters $q$ and independent of $\eta$, respectively, and $p, q$ are random between 0 and 1, but not necessarily independent. We call this random coefficient autoregressive model, after the time series literature on random coefficient autoregressive count models \citep{zheng2007first}.

 \begin{thm}
 	If the support of $(X, Y)$ is the entire space $\mathbb{N}_0^2$, then model \eqref{inar2} is compatible if and only if
 $p$ and $q$ both follow the beta distribution with parameters $(\alpha, \beta_1)$ and  $(\alpha, \beta_2)$, respectively, and $\epsilon$ (resp.,  $\eta$) follows NB distributions with number of successes parameter $\beta_2$ (resp., $\beta_1$) and common probability parameter $\theta$ between 0 and 1.
 \end{thm}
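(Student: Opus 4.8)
The plan is to invoke the compatibility criterion of Theorem 1 (separability of the ratio of the two conditional pmf's), mirroring the proof of Corollary \ref{Feb22a} but now with beta--binomial rather than binomial thinning. Write $g,h$ for the pmf's of $\epsilon,\eta$ and let $F_p,F_q$ be the laws of $p,q$ on $[0,1]$. Since $Z_i\mid p$ and $W_i\mid q$ are Bernoulli, the thinned sums are mixed-binomial, so
\[
\ell(x\mid y)=\sum_{i=0}^{\min(x,y)}\binom{y}{i}\nu_i(y)\,g(x-i),\qquad
\ell(y\mid x)=\sum_{j=0}^{\min(x,y)}\binom{x}{j}\rho_j(x)\,h(y-j),
\]
with $\nu_i(y)=\mathbb{E}_p[p^i(1-p)^{y-i}]$ and $\rho_j(x)=\mathbb{E}_q[q^j(1-q)^{x-j}]$. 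Because the support is all of $\mathbb{N}_0^2$, condition $i)$ of Theorem 1 holds and $g,h$ are strictly positive. Setting $y=0$ then $x=0$ in \eqref{casen2} pins down $u(\cdot),v(\cdot)$ exactly as in Corollary \ref{Feb22a}, and $x=y=0$ fixes the constant to $1$. Compatibility thus reduces to the single doubly-indexed identity
\[
R(x,y):=\frac{1}{\mu_p(y)}\sum_{i=0}^{\min(x,y)}\binom{y}{i}\nu_i(y)\frac{g(x-i)}{g(x)}
=\frac{1}{\mu_q(x)}\sum_{j=0}^{\min(x,y)}\binom{x}{j}\rho_j(x)\frac{h(y-j)}{h(y)},\qquad\forall x,y,
\]
where $\mu_p(y)=\nu_0(y)$, $\mu_q(x)=\rho_0(x)$.

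For the \emph{if} direction I would substitute the claimed laws (writing $(a)_k=a(a+1)\cdots(a+k-1)$). For $p\sim\mathrm{Beta}(\alpha,\beta_1)$ one gets $\nu_i(y)/\mu_p(y)=(\alpha)_i/(\beta_1+y-i)_i$, and for $\epsilon\sim\mathrm{NB}(\beta_2,\theta)$ one gets $g(x-i)/g(x)=\tfrac{(x-i+1)_i}{(\beta_2+x-i)_i}(1-\theta)^{-i}$. Collecting terms, the $i$-th summand on the left becomes
\[
\frac{(\alpha)_i}{i!}\,\frac{(x-i+1)_i\,(y-i+1)_i}{(\beta_1+y-i)_i\,(\beta_2+x-i)_i}\,(1-\theta)^{-i},
\]
which is symmetric under the simultaneous exchange $x\leftrightarrow y$, $\beta_1\leftrightarrow\beta_2$. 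With $q\sim\mathrm{Beta}(\alpha,\beta_2)$ and $\eta\sim\mathrm{NB}(\beta_1,\theta)$ the right-hand side reproduces this summand verbatim, so $R$ holds term by term; the shared $\alpha$ and $\theta$ together with the cross-placement of $\beta_1,\beta_2$ are precisely what make the two expansions coincide, and Theorem 1 gives compatibility.

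The \emph{only if} direction is where I expect the real work. Taking $x=1$ (all $y$) in $R$ yields $y\,\gamma_1^{(p)}(y)\,g(0)/g(1)=\delta_1(1)\,h(y-1)/h(y)$, where $\gamma_1^{(p)}(y)=\nu_1(y)/\mu_p(y)$ and $\delta_1(1)=\mathbb{E}[q]/(1-\mathbb{E}[q])$; the symmetric $y=1$ slice expresses $g(x)/g(x-1)$ through $\gamma_1^{(q)}(x)$. These first-order recursions determine $g,h$ from the first-moment-ratio sequences of $q,p$, but do not by themselves fix their form. I would then feed the $x=1$ recursion into the $x=2$ slice to eliminate $h$, obtaining a relation that expresses $\gamma_2^{(p)}(y)$ through $\gamma_1^{(p)}(y)$ and $\gamma_1^{(p)}(y-1)$; combined with the internal moment relations $\nu_i(y)=\nu_i(y+1)+\nu_{i+1}(y+1)$ (valid because the $\nu_i(y)$ are moments of the single law $F_p$), this should force $\gamma_1^{(p)}(y)$ to be a ratio of linear polynomials, $\gamma_1^{(p)}(y)=\alpha/(\beta_1+y-1)$, equivalently forcing $\eta$ to be NB. Writing $\gamma_1^{(p)}(y)=(a_{0,y-1}-a_{0,y})/a_{0,y}$ with $a_{0,y}=\mathbb{E}_p[(1-p)^y]$ then gives the recursion $a_{0,y}/a_{0,y-1}=(\beta_1+y-1)/(\alpha+\beta_1+y-1)$, whose unique solution $a_{0,y}=(\beta_1)_y/(\alpha+\beta_1)_y$ identifies $F_p=\mathrm{Beta}(\alpha,\beta_1)$ by the Hausdorff moment problem on $[0,1]$; symmetrically $F_q=\mathrm{Beta}(\alpha',\beta_2)$. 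Matching the cross-constants and the common geometric ratio arising in the $g$- and $h$-recursions finally forces $\alpha'=\alpha$ and the stated parameter relations.

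The main obstacle is the step that breaks the apparent circularity between ``$\eta$ is NB'' and ``$\gamma_1^{(p)}$ is rational'': one must rule out every non-Beta mixing law $F_p$ directly from the higher slices $x=2,3,\dots$ and the moment constraints, rather than assuming the NB form of $\eta,\epsilon$ at the outset. Once the rational form of the first-moment ratio is secured, the Hausdorff-moment argument and the parameter matching are routine.
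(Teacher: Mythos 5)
Your overall strategy coincides with the paper's own proof: the paper likewise writes both conditionals as mixed-binomial thinnings plus innovations, invokes the separability criterion of Theorem 1, normalizes via the $x=0$ and $y=0$ slices, and then extracts the $x=1$, $y=1$ and $x=2$ slices. Your \emph{if} direction (term-by-term identification of the two hypergeometric-type expansions under the stated beta/NB laws) is correct, and is in fact more explicit than the paper, which handles sufficiency only implicitly through the trivariate NB reduction stated after the theorem.

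The genuine gap is in the \emph{only if} direction, exactly where you flag it yourself. Everything up to the relation obtained by feeding the $x=1$ recursion into the $x=2$ slice is sound, but the claim that this ``should force $\gamma_1^{(p)}(y)$ to be a ratio of linear polynomials'' is precisely the mathematical core of the theorem, and your proposal never establishes it: as written, it is a reduction to an unproven characterization of the beta distribution, not a proof. The paper closes this step concretely: combining $A(2,2)H(2,y)=G(2,2)B(2,y)$ (the $x=2$ slice) with the $x=1$ slice $H(1,y)/B(1,y)=\mathrm{const}$, it deduces that the ratio $\frac{\mathbb{E}[p^2(1-p)^{y-2}]\,\mathbb{E}[(1-p)^{y-1}]}{\mathbb{E}[p(1-p)^{y-2}]\,\mathbb{E}[p(1-p)^{y-1}]}$ is constant in $y$, and then invokes the known result of Gouri\'eroux and Lu (2019, p.~3887) that such constancy characterizes the beta distribution; only then is $h$ backed out from the first-order recursion. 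If you want your sketch to be self-contained, you must actually prove that characterization: setting $a_y=\mathbb{E}[(1-p)^y]$, $b_y=a_y-a_{y+1}$, $c_y=a_y-2a_{y+1}+a_{y+2}$, the constancy $c_j a_{j+1}=\kappa\, b_j b_{j+1}$ is a nonlinear second-order recursion that determines the whole sequence $(a_y)$ from $a_1$ and $\kappa$; one checks that $\kappa>1$ whenever $\mathbb{V}[p]>0$, that the beta law with $\alpha=1/(\kappa-1)$ and $\beta$ fitted to $a_1$ satisfies the same recursion with the same initial data, and concludes by uniqueness of the Hausdorff moment problem on $[0,1]$. That argument (or the citation replacing it) is what is missing; note also that it shows the $x=2$ slice already suffices, so the higher slices $x=3,4,\dots$ you anticipate needing play no role.
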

This compatible model has the stochastic (trivariate NB reduction) representation:
$$
X=_{(d)} Z+ \epsilon, \qquad Y_{(d)}=Z+\eta,
$$
where $Z, \epsilon, \eta$ are independent,  NB distributed, with parameters $(\alpha, \theta)$, $(\beta_1, \theta)$, $(\beta_2, \theta)$. We get:
\begin{equation}
\label{trivariatenb}
\mathbb{E}[X\mid Y]= \frac{\alpha}{\alpha+\beta_2}Y+ \frac{\beta_1 (1-\theta)}{\theta}, \qquad \mathbb{E}[Y\mid X]= \frac{\alpha}{\alpha+\beta_1}X+ \frac{\beta_2 (1-\theta)}{\theta}.
\end{equation}

This result extends Proposition OA.1 of \cite{gourieroux2023noncausal}, who deal with the case where $X, Y$ have equal margins.

\begin{proof}
See Appendix \ref{proofprop4}.
\end{proof}

\subsection{The higher-dimensional case}
Let us now consider the extension of Theorem 3.  We have:
\begin{thm}
Consider the modified model \eqref{car1}-\eqref{car3} in which $(W_{1i})$,  $(W_{2i})$, ..., $(W_{6i})$ are i.i.d. given stochastic probability parameters $p_1, p_2, ..., p_6$.  Then, this model does not have any non-degenerate solution.
\end{thm}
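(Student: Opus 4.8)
The plan is to mirror the proof of Theorem 3, reducing the three-dimensional problem to the bivariate characterization of Theorem 4 by freezing one coordinate at a time. Assume, for contradiction, that a solution with full support on $\mathbb{N}_0^3$ exists in which all six sequences $(W_{1i}),\dots,(W_{6i})$ are non-degenerate, i.e. none of the coefficients $p_1,\dots,p_6$ is identically $0$. First I would set $Z=0$ in \eqref{car1}--\eqref{car2}: this kills the thinned contributions $\sum_{i=1}^Z W_{2i}$ and $\sum_{i=1}^Z W_{4i}$ and leaves a genuine bivariate random coefficient model for $(X,Y)\mid Z=0$, with thinning coefficients $p_1,p_3$ and innovations $\epsilon_1,\epsilon_2$. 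Since $p_1,p_3$ are non-degenerate, Theorem 4 applies and forces $p_1,p_3$ to be Beta and $\epsilon_1,\epsilon_2$ to be NB with a common probability parameter $\theta$; because $\epsilon_1\perp Z$, this law of $\epsilon_1$ is unconditional. Repeating this with $Y=0$ (for $(X,Z)$, via \eqref{car1} and \eqref{car3}) and with $X=0$ (for $(Y,Z)$) pins down every marginal: each $p_j$ is Beta and each $\epsilon_k$ is NB, and matching the shared innovations $\epsilon_1,\epsilon_2,\epsilon_3$ across the three slices forces a single $\theta$ throughout.

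Next comes the degeneracy-forcing step, the analogue of taking $Z=1$ in the proof of Theorem 3. Freezing $Z=1$ reintroduces exactly one thinned term $W_{2,1}$ into the $X$-equation, so that $X\mid Y,Z=1 =_{(d)} \sum_{i=1}^Y W_{1i} + (W_{2,1}+\epsilon_1)$. If this is again a bivariate random coefficient model, Theorem 4 requires its innovation $W_{2,1}+\epsilon_1$ to be NB. But $W_{2,1}$ is a (mixed) Bernoulli with success probability $\bar p_2=\mathbb{E}[p_2]$, independent of $\epsilon_1$, so the innovation pgf equals $(1-\bar p_2+\bar p_2 u)\,\mathbb{E}[u^{\epsilon_1}]$; the Bernoulli factor contributes a real zero at $u=1-1/\bar p_2\le 0$, whereas an NB pgf is zero-free. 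Hence $\bar p_2=0$, i.e. $p_2\equiv 0$ and $(W_{2i})$ is degenerate, contradicting the standing assumption. The same argument on each coordinate plane forces every cross coefficient to vanish, so no solution can have all six sequences non-degenerate.

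The main obstacle is that, when the coefficients are dependent, the $Z=1$ slice need not be a bona fide random coefficient model: if $p_1$ and $p_2$ are dependent then $W_{2,1}$ is correlated, through the pair $(p_1,p_2)$, with the retained thinning $\sum_{i=1}^Y W_{1i}$, so the innovation $W_{2,1}+\epsilon_1$ is no longer independent of the thinning and Theorem 4 cannot be invoked verbatim. The crux is therefore to establish that coefficients appearing in a common equation are in fact independent. A natural handle is the ratio, for fixed $u\in(0,1)$,
\[
R(u,z)=\frac{\mathbb{E}[u^X\mid Y=1,Z=z]}{\mathbb{E}[u^X\mid Y=0,Z=z]}=\frac{\mathbb{E}\big[(1-p_1+p_1u)(1-p_2+p_2u)^z\big]}{\mathbb{E}\big[(1-p_2+p_2u)^z\big]},
\]
which is the expectation of $1-p_1+p_1u$ under the law of $(p_1,p_2)$ reweighted by $(1-p_2+p_2u)^z$; it is free of $z$ if and only if $p_1$ and $p_2$ are independent. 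I would show, using the three-way compatibility of $\ell(x\mid y,z)$, $\ell(y\mid x,z)$, $\ell(z\mid x,y)$ together with the already-established Beta and NB marginals, that $R(u,z)$ must be constant in $z$, hence $p_1\perp p_2$; applying the same reasoning to each equation yields pairwise independence of all the coefficients and returns the problem to the independent case treated above. I expect this independence bookkeeping to be the delicate heart of the argument, whereas the closing NB-versus-Bernoulli incompatibility is routine.
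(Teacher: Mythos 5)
Your proposal is correct and follows essentially the same route as the paper's own proof: freeze $Z=0$ to invoke the bivariate random coefficient characterization (Theorem 4), forcing $\epsilon_1$ to be NB and $p_1$ Beta, then freeze $Z=1$ so that the slice's innovation becomes $W_{21}+\epsilon_1$, a (mixed) Bernoulli convolved with an NB, which Theorem 4 would require to be NB — impossible, and your zero-of-the-pgf argument supplies exactly the justification the paper leaves as ``this is not possible.'' Your third-paragraph concern about dependent coefficients goes beyond what the paper addresses (its proof tacitly treats the six sequences together with their random parameters $p_1,\dots,p_6$ as mutually independent, so the $Z=1$ slice is a bona fide random coefficient model), and under that reading your first two paragraphs already constitute a complete proof.
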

Thus,  the random coefficient model only has solution in dimension 2.  %The proof is similar to the proof of Proposition 4.
\begin{proof}
We take $Z=0$ and apply Theorem 3 to the joint distribution of $(X, Y)$ given $Z=0$.  We deduce that $\epsilon_1$, $\epsilon_2$ are NB distributed with the same probability parameter $\pi$, and number of successes parameters $\beta_1$ and $\beta_2$.  Moreover,  the stochastic parameter $p_1$ is beta distributed with parameters $\alpha$ and $\beta_2$, whereas the stochastic parameter $p_3$ is beta distributed with parameters $\alpha$ and $\beta_1$,  respectively.

We then take $Z=1$ and apply Theorem 3 to the joint distribution of $(X, Y)$ given $Z=0$.  We conclude that $\epsilon_1+ W_{21}$ should still be NB, where $W_{21}$ is Bernoulli and $\epsilon_1$ is NB.  This is not possible.  We have arrived at a contradiction.
\end{proof}
\section{Linear conditional expectation model}
We have seen that many of the solutions to the bivariate compound autoregressive or random coefficient autoregressive model fail to extend to dimension 3.  Thus, a natural question is whether we can relax the assumption on the conditional distributions and specify only the conditional expectations,  in order to allow for more flexible joint distributions.  %For instance,  let us consider the bivariate regression model \eqref{linearconditionalexp}:

 \subsection{Some examples}

It is well known that if the conditional distributions of $X\mid U$ and $Y\mid U$ belong to the same linear exponential family and if the distribution of $U$ is conjugate prior, then under certain regularity conditions, the conditional expectations $\mathbb{E}[Y\mid X]$ and $\mathbb{E}[X\mid Y]$ are all linear.  This is for instance the case in Example 1, which is also extended in Example 2.  The following Example 3 discusses another popular conjugacy for count distributions.

\begin{exm}[Beta-NB conjugacy]
Let $U$ be beta$(\alpha_1, \alpha_2)$ distributed. Assume that, given $U$, $X$ and $Y$ are NB distributed with parameters $(r_1, p)$ and $(r_2, p)$, respectively.  Then %we can show that $X \mid Y$ and $Y \mid X$ are both conditional beta-NB.  Indeed,
by the Bayes' formula, the posterior distribution of $U\mid X=x$ has the density:
$$
\pi(u|x) =\frac{\ell(x|u) \pi(u)}{\ell(x)} \propto \binom{x+r_1-1}{x} (1-u)^x u^{r_1} u^{\alpha_1-1}(1-u)^{\alpha_2-1},
$$
which is beta with parameters $\alpha_1+r_1$ and  $\alpha_2+x$. Thus, the conditional distribution of $Y \mid X$
%has the pmf:
%\begin{equation*}
%	\int_0^1 \Big[  \binom{y+r_2-1}{y} (1-p)^y p^{r_2}   \Big] \Big[ \frac{p^{\alpha_1+r_1}(1-p)^{\alpha_2+x}}{B(\alpha_1+r_1, \alpha_2 +x )} \Big] \mathrm{d}p= \binom{y+r_2-1}{y} \frac{B(\alpha_1+r_1+r_2, \alpha_2 +x +y)}{B(\alpha_1+r_1, \alpha_2 +x )},
%\end{equation*}
%which is the pmf of the
is beta-NB distributed with number of successes parameter $r_2$ and beta parameters $\alpha_1+r_1$ and  $\alpha_2+X$. Similarly, the conditional distribution of $X \mid Y$ is also beta-NB distributed with number of successes parameter $r_1$ and beta parameters $\alpha_2+r_2$ and  $\alpha_1+Y$.

By the iterated expectation formula, we get:
\begin{equation}\label{betanb}
\mathbb{E}[Y\mid X]=\frac{r_2 (\alpha_2+X)}{\alpha_1+r_1-1}, \qquad  \mathbb{E}[X\mid Y]=\frac{r_1 (\alpha_2+Y)}{\alpha_1+r_2-1},
\end{equation}
so long as $\alpha_1+r_1> 1$ and ${\alpha_1+r_2>1}$. 

\end{exm}

Similarly to Example 2, we can extend Example 3 to the higher dimensional case. 

The following two examples involve solutions with bounded support.
\begin{exm}
	If $(X_1,...,X_n)$ is a joint mix, that is,  $X_1+\cdots+X_n$ is a constant, say $M$,  then the joint distribution satisfies:
 \begin{align}
 \mathbb{E}[X \mid Y,  Z] &= a_{11} +a_{12} Y + a_{13} Z, \label{pm1}\\
 \mathbb{E}[Y \mid X,  Z] &= a_{22} +a_{21} X + a_{23} Z , \label{pm2}\\
  \mathbb{E}[Z \mid X, Y] &= a_{33} +a_{31} X + a_{32} Y,   \label{pm3}
 \end{align}
with regression coefficients all equal to $-1$.  An example of jointly mix distribution is, for instance, the multinomial distribution.
\end{exm}
This example, however, involves a multivariate distribution whose support is not $\mathbb{N}_0^n$.  Moreover, the regression coefficients are all negative and equal to $-1$.

\begin{exm}
Let us assume that $(Z_1,  Z_2, Z_3)$ follows the three-dimensional multinomial distribution with size parameter $n$ and probability parameters $p_1,  p_2,  p_3$ such that $p_1+p_2+p_3=1$ and $Z_1+Z_2+Z_3=n$,  and consider the joint distribution of $X=Z_1,  Y=Z_1+Z_3$.  Then, we have:
\begin{align*}
\mathbb{E}[X \mid Y ]&= \frac{p_1}{p_1+p_3}Y, \\
  \mathbb{E}[Y \mid X]&= \mathbb{E}[n-Z_2 \mid Z_1] =n-\mathbb{E}[Z_2 \mid Z_1]=n-\mathbb{E}[Z_2 \mid Z_2+Z_3=n-X]= n-\frac{p_2}{p_2+p_3}(n-X)\\
  &= \frac{p_2}{p_2+p_3}X+\frac{p_3}{p_2+p_3}n.
\end{align*}
This example is such that one of the constants in the linear regressions is equal to zero.
\end{exm}

\subsection{Some counter-examples}
According to Theorems 3 and 4,  some extensions of the bivariate distributions found to solve either the compound autoregressive  or the random coefficient model may no longer solve the respective model in the three-dimensional case.  Is it nevertheless possible that they satisfy at least the linear conditional expectation condition? In this subsection, we show that this is in general not the case.
\begin{exm}Consider the following extension of Example 2:
\begin{align}
X&= W_1+W_{12}+W_{13}, \label{w1} \\
Y&= W_2+ W_{12}+W_{23},  \label{w2}\\
Z&= W_3+ W_{13}+W_{23}, \label{w3}
\end{align}
where $W_1, W_2, W_3, W_{12}, W_{13}, W_{23}$ are independent Poisson variables with positive parameters $\lambda_1, \lambda_2, \lambda_3, \lambda_{12}, \lambda_{13}, \lambda_{23}$, respectively.

Alternatively, we also consider the model:
 \begin{align}
	X&= W_1+W_{123}, \label{w123}\\
	Y&= W_2+ W_{123}, \label{w123bis}\\
	Z&= W_3+W_{123},\label{w123ter}
\end{align}
where $W_1, W_2, W_3, W_{123}$ are independent Poisson distributed with parameters $\lambda_1, \lambda_2, \lambda_3, \lambda_{123},$ respectively.  
\end{exm}

It is straightforward to show that in both models \eqref{w1}-\eqref{w3} and \eqref{w123}-\eqref{w123ter},  pairwisely, $(X, Y)$, $(Y, Z)$, $(Z, X)$ all have the trivariate Poisson reduction representation,  and thus have linear pairwise conditional expectations.  The following lemma says that jointly, though,  the distribution of $(X, Y, Z)$ does not have linear conditional expectation.
\begin{lm}
\label{ex6}
In both models \eqref{w1}-\eqref{w3} and \eqref{w123}-\eqref{w123ter},  the conditional expectations,  such as $\mathbb{E}[X\mid  Y, Z]$,  are non-linear in conditioning variables except in the mutually independent case.
\end{lm}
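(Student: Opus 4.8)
The plan is to compute the relevant conditional expectation explicitly in each model and show it cannot be linear in the conditioning variables (i.e.\ of the form $\alpha y+\beta z+\gamma$) once a coupling parameter is positive; the remaining two conditional expectations then follow from the permutation symmetry of the two constructions. In both models the first move is to strip off the idiosyncratic summand: since $W_1$ is independent of $(Y,Z)$, we have $\mathbb{E}[X\mid Y,Z]=\lambda_1+\mathbb{E}[W^{\ast}\mid Y,Z]$, where $W^{\ast}=W_{123}$ in model \eqref{w123}--\eqref{w123ter} and $W^{\ast}=W_{12}+W_{13}$ in model \eqref{w1}--\eqref{w3}. In each case the posterior law of the shared Poisson components given $(Y,Z)=(y,z)$ is supported on a finite triangle and its weights are a product of the relevant Poisson factors, so $\mathbb{E}[W^{\ast}\mid y,z]$ is a ratio of finite sums; the strategy is to pin down the only possible linear form from the boundary and then refute it at a single interior point.

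For model \eqref{w123}--\eqref{w123ter} this is very clean. Writing $S=W_{123}$, the conditional weight on $s$ is proportional to $\theta^{s}/[s!(y-s)!(z-s)!]$ with $\theta=\lambda_{123}/(\lambda_2\lambda_3)$, supported on $0\le s\le\min(y,z)$. Because $S\le\min(Y,Z)$, we get $\mathbb{E}[S\mid 0,z]=\mathbb{E}[S\mid y,0]=0$. If $\mathbb{E}[S\mid y,z]=\alpha y+\beta z+\gamma$ were linear, vanishing along both axes would force $\alpha=\beta=\gamma=0$, hence $\mathbb{E}[S\mid y,z]\equiv 0$; but $\mathbb{E}[S\mid 1,1]=\theta/(1+\theta)>0$ as soon as $\lambda_{123}>0$, a contradiction. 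So $\mathbb{E}[X\mid Y,Z]$ is non-linear unless $\lambda_{123}=0$, which is exactly the mutually independent case.

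For model \eqref{w1}--\eqref{w3} the axes no longer force the expectation to zero, so I instead use them to identify the unique candidate. Setting $y=0$ forces $W_2=W_{12}=W_{23}=0$, reducing the problem to the binomial-thinning fact used in the Remark after Corollary~2 and giving $\mathbb{E}[X\mid 0,z]=\lambda_1+\frac{\lambda_{13}}{\lambda_{13}+\lambda_3}z$; symmetrically $\mathbb{E}[X\mid y,0]=\lambda_1+\frac{\lambda_{12}}{\lambda_{12}+\lambda_2}y$. These two restrictions determine the only linear form compatible with the boundary, namely $\lambda_1+\frac{\lambda_{12}}{\lambda_{12}+\lambda_2}\,y+\frac{\lambda_{13}}{\lambda_{13}+\lambda_3}\,z$.

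The final step refutes this candidate at the interior point $y=z=1$. Enumerating the five admissible triples $(W_{12},W_{13},W_{23})$ yields
$$\mathbb{E}[W_{12}+W_{13}\mid 1,1]=\frac{\lambda_{12}\lambda_3+\lambda_2\lambda_{13}+2\lambda_{12}\lambda_{13}}{\lambda_2\lambda_3+\lambda_{12}\lambda_3+\lambda_2\lambda_{13}+\lambda_{12}\lambda_{13}+\lambda_{23}},$$
whereas the candidate predicts $\frac{\lambda_{12}}{\lambda_{12}+\lambda_2}+\frac{\lambda_{13}}{\lambda_{13}+\lambda_3}$, whose denominator factors as $(\lambda_{12}+\lambda_2)(\lambda_{13}+\lambda_3)$ and coincides with the denominator above exactly when $\lambda_{23}=0$. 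Since including $\lambda_{23}>0$ strictly inflates the denominator while leaving the numerator fixed, the two values differ, so $\mathbb{E}[X\mid Y,Z]$ is non-linear precisely when $\lambda_{23}>0$. By the permutation symmetry of the model, $\mathbb{E}[Y\mid X,Z]$ and $\mathbb{E}[Z\mid X,Y]$ are non-linear when $\lambda_{13}>0$ and $\lambda_{12}>0$ respectively, so all three conditional expectations are linear only when $\lambda_{12}=\lambda_{13}=\lambda_{23}=0$, the mutually independent case. I expect the only delicate point to be the bookkeeping in this interior-point enumeration (and the analogous one for model \eqref{w123}--\eqref{w123ter}); everything else is forced by the boundary reductions and the symmetry of the construction.
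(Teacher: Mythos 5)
Your proof is correct, and it reaches the conclusion by a genuinely different route in the refutation step. Both arguments begin the same way: restrict to the axes $\{Y=0\}$ and $\{Z=0\}$ to pin down what a linear conditional expectation would have to be. But the paper works with \emph{averaged} boundary information -- it computes $\mathbb{E}[X\mid Z=0]$ and $\mathbb{E}[Y\mid Z=0]$ to identify the coefficients $a_{11},a_{12},a_{13}$ -- and then derives the contradiction \emph{globally}, from the unconditional moment identity $\mathbb{E}[X]=a_{11}+a_{12}\mathbb{E}[Y]+a_{13}\mathbb{E}[Z]$, whose right-hand side picks up extra $\lambda_{23}$ terms. You instead extract the full functional form of the conditional mean along each axis (via Poisson binomial thinning), which determines the unique linear candidate, and then refute that candidate \emph{pointwise} by enumerating the posterior at the single interior point $(y,z)=(1,1)$. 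Your bookkeeping there is right: the five admissible triples give a conditional mean whose numerator coincides exactly with that of the candidate $\frac{\lambda_{12}}{\lambda_{12}+\lambda_2}+\frac{\lambda_{13}}{\lambda_{13}+\lambda_3}$, while the denominators differ by exactly $\lambda_{23}$. For model \eqref{w123}--\eqref{w123ter} your support argument ($W_{123}\le\min(Y,Z)$ forces the conditional mean to vanish on both axes, so any linear version must be identically zero, contradicted at $(1,1)$) is arguably cleaner than the paper's, which again routes through marginal expectations. What your approach buys: it is purely local (no unconditional moments needed) and it yields a finer statement, identifying for each conditional expectation which coupling parameter drives the nonlinearity. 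What the paper's buys: brevity, first moments only, no posterior enumeration.

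One caveat, which concerns your refinement rather than the lemma itself: the claim that $\mathbb{E}[X\mid Y,Z]$ is nonlinear \emph{precisely} when $\lambda_{23}>0$ requires the numerator $\lambda_{12}\lambda_3+\lambda_2\lambda_{13}+2\lambda_{12}\lambda_{13}$ to be positive, i.e.\ $\lambda_{12}>0$ or $\lambda_{13}>0$. In the degenerate configuration $\lambda_{12}=\lambda_{13}=0$, $\lambda_{23}>0$, the variable $X$ is independent of $(Y,Z)$ and one checks that all three conditional expectations are linear even though $X,Y,Z$ are not mutually independent; so your closing sentence (``linear only when $\lambda_{12}=\lambda_{13}=\lambda_{23}=0$'') overstates slightly. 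Since the paper defines both models with all parameters strictly positive, this edge case is excluded and your proof of the lemma as stated stands; only the ``precisely when'' characterization would need the extra conjunct.
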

\begin{proof}
See Appendix \ref{proofex6}.
\end{proof}

The same technique can be used to show that if we extend the bivariate model found in Theorem 4,  that is by assuming that $W_1, W_2, W_3, W_{12}, W_{13}, W_{23}$ are independent NB distributed with the same probability parameter,  then the resulting model also does not allow for linear conditional expectations.

\begin{exm}
Let us now consider the joint distribution of $X, Y$ and $Z=N$ in model \eqref{causalchain}.  We have shown that $N$ marginally follows a NB distribution with number of successes parameter $\delta$ and probability parameter $p_0$,  say, and by the Markov property,  $X, Y$ are independent given $N$,  with conditional NB distribution  with the same number of successes parameters $\delta+N$ and probability parameters $p_1, p_2$, say, respectively.  Thus, the conditional expectations $\mathbb{E}[X \mid N, Y]$ and $\mathbb{E}[Y\mid N, X]$ all depend on $N$ only and are linear in $N$.  The next lemma says that $\mathbb{E}[N \mid X, Y]$ is not linear.
 \end{exm}

 \begin{lm}
\label{marcheplusdimension3}
 In model \eqref{causalchain},  the joint distribution of $X, Y$ and $Z=N$  is such that $\mathbb{E}[N \mid X, Y]$ is non-linear in $X$ and $Y$.
 \end{lm}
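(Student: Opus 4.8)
The plan is to leverage the explicit joint law recalled in the preceding example and to reduce the non-linearity claim to a single interior evaluation. By that example, $N \sim NB(\delta, p_0)$, while conditionally on $N$ the counts $X$ and $Y$ are independent and distributed as $NB(\delta+N, p_1)$ and $NB(\delta+N, p_2)$. Writing out the three pmf's in the paper's convention and discarding every factor independent of $n$, the posterior weights collapse to
\begin{equation*}
\mathbb{P}[N=n \mid X=x, Y=y] \propto w_n(x,y) := \frac{\Gamma(x+\delta+n)\,\Gamma(y+\delta+n)}{n!\,\Gamma(\delta+n)}\,c^n, \qquad c := (1-p_0)p_1 p_2 .
\end{equation*}
First I would note that $c \in (0,1)$ in the non-degenerate case, which guarantees convergence of all the sums below.

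Next I would use the two coordinate axes to pin down the only possible affine candidate. Setting $y=0$ makes the factor $\Gamma(y+\delta+n)=\Gamma(\delta+n)$ cancel, so $w_n(x,0) \propto \binom{x+\delta+n-1}{n}c^n$, which are exactly the weights of an $NB(x+\delta, 1-c)$ law; hence $\mathbb{E}[N \mid X=x, Y=0] = (x+\delta)c/(1-c)$ is linear in $x$, and by symmetry $\mathbb{E}[N \mid X=0, Y=y] = (y+\delta)c/(1-c)$ is linear in $y$. If $\mathbb{E}[N \mid X, Y]$ were affine, matching it on these two axes would force it to equal $\tfrac{c}{1-c}(X+Y+\delta)$. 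It therefore suffices to contradict this equality at one interior point, and I would choose $(x,y)=(1,1)$.

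At $(1,1)$ one has $w_n(1,1) \propto (\delta+n)^2 v_n$, where $v_n \propto \Gamma(\delta+n)/n!\,c^n$ are (proportional to) the weights of the reference law $NB(\delta, 1-c)$. Writing $\mathbb{E}_0$ for expectation under this reference law and $\mu := c/(1-c)$, this gives $\mathbb{E}[N \mid 1,1] = \mathbb{E}_0[N(\delta+N)^2]/\mathbb{E}_0[(\delta+N)^2]$, and both numerator and denominator are read off from the negative binomial factorial moments $\mathbb{E}_0[N(N-1)\cdots(N-k+1)] = \delta(\delta+1)\cdots(\delta+k-1)\,\mu^k$. The bookkeeping of this third-order computation is the one genuinely laborious step; carrying it out gives the clean identity
\begin{equation*}
\mathbb{E}_0[N(\delta+N)^2] - \mu(2+\delta)\,\mathbb{E}_0[(\delta+N)^2] = \delta\mu(1+\mu),
\end{equation*}
which is strictly positive for every $\delta>0$ and $c\in(0,1)$. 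Hence $\mathbb{E}[N\mid 1,1] > \tfrac{c}{1-c}(2+\delta)$, contradicting the forced affine form and proving non-linearity. The main obstacle is precisely this moment identity; I would in particular double-check the cancellation of the $\mu^3$ terms, since it is what leaves behind the simple strictly positive remainder $\delta\mu(1+\mu)$ and thus secures the strict inequality for all admissible parameters.
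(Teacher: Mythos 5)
Your proof is correct, and I have checked the computation you flagged: with $\mu=c/(1-c)$ and the factorial moments $\mathbb{E}_0[N]=\delta\mu$, $\mathbb{E}_0[N(N-1)]=\delta(\delta+1)\mu^2$, $\mathbb{E}_0[N(N-1)(N-2)]=\delta(\delta+1)(\delta+2)\mu^3$, the cubic terms $\delta(\delta+1)(\delta+2)\mu^3$ and $(2+\delta)\delta(\delta+1)\mu^3$ do cancel, and the remainder is exactly $\delta\mu^2+\delta\mu=\delta\mu(1+\mu)>0$, so the strict inequality $\mathbb{E}[N\mid X=1,Y=1]>\mu(\delta+2)$ holds for all admissible parameters. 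Your argument coincides with the paper's in its first half: both write the posterior weights of $N$ given $(X,Y)=(x,y)$ as proportional to $\Gamma(x+\delta+n)\Gamma(y+\delta+n)c^n/\bigl(n!\,\Gamma(\delta+n)\bigr)$ with $c=(1-p_0)p_1p_2$, observe that on the two axes these are negative binomial weights, and conclude that any affine conditional expectation would have to equal $\frac{c}{1-c}(\delta+X+Y)$. The two proofs then diverge in how they refute this candidate. The paper argues \emph{globally}: it takes the marginal expectation of the candidate, substitutes the marginal means $\mathbb{E}[N]=\delta(1-p_0)/p_0$, $\mathbb{E}[X]=\frac{1-p_1}{p_1}(\delta+\mathbb{E}[N])$, $\mathbb{E}[Y]=\frac{1-p_2}{p_2}(\delta+\mathbb{E}[N])$, and reduces to the impossible identity $(1-p_1)(1-p_2)=0$. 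You argue \emph{locally}: a single interior evaluation at $(1,1)$, reduced to third-order moments of the reference law $NB(\delta,1-c)$. Your route has two advantages: it is self-contained given the conditional specification (it never needs the marginal moments of $X$ and $Y$, nor the relations among $p_0,p_1,p_2$ that the paper's ``tedious but elementary algebra'' exploits), and it is more informative, since it exhibits the direction and size of the failure --- the true conditional mean strictly exceeds the affine candidate at $(1,1)$ by $\delta\mu(1+\mu)/\mathbb{E}_0[(\delta+N)^2]$. The paper's route avoids third-order moment bookkeeping and stays at the level of first moments, at the cost of a less transparent final cancellation. Both are valid; yours could be spliced in as an alternative proof with no change to the statement.
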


 \begin{proof}
 See Appendix \ref{proofmarcheplusdimension3}.
 \end{proof}

 \subsection{New solutions through linear programming}

 %It seems difficult to find solutions with positive correlation between different counts,  other than those based on conjugate priors like in Examples 2 and 3.  In this subsection, we work out some necessary conditions on the regression coefficients.
Let us first work out the necessary conditions on the regression coefficients in model \eqref{linearconditionalexp}.
\begin{lm}
\label{principalminor2}
If model \eqref{linearconditionalexp} has a solution $(X,Y)$ with positive variances, then $ac=[{\rm Corr}(X,Y)]^2$ and one of the following cases is satisfied:

\noindent (i) $a=c=0$ and $X, Y$ are uncorrelated.

\noindent (ii) $ac\in(0,1)$.

\noindent (iii) $ac=1$ and $Y=aX+b$,  that is,  $X$ and $Y$ are linearly dependent.
\end{lm}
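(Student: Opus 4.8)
The plan is to compute ${\rm Cov}(X,Y)$ in two different ways using the tower property of conditional expectation, and then read off the constraints on $a$ and $c$ from the Cauchy--Schwarz inequality.

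First I would take unconditional expectations of the two identities in \eqref{linearconditionalexp} to obtain $\mathbb{E}[X]=c\,\mathbb{E}[Y]+d$ and $\mathbb{E}[Y]=a\,\mathbb{E}[X]+b$; these pin down the intercepts and are needed only at the very end. The key step is to evaluate the covariance by iterated expectations. Conditioning on $Y$ gives
$$
{\rm Cov}(X,Y)=\mathbb{E}[Y\,\mathbb{E}[X\mid Y]]-\mathbb{E}[X]\mathbb{E}[Y]=\mathbb{E}[Y(cY+d)]-\mathbb{E}[X]\mathbb{E}[Y]=c\,{\rm Var}(Y),
$$
and by the symmetric computation, conditioning on $X$ yields ${\rm Cov}(X,Y)=a\,{\rm Var}(X)$. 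Multiplying these two expressions and dividing by ${\rm Var}(X){\rm Var}(Y)>0$ (which is where the positive-variance hypothesis enters) gives
$$
ac=\frac{[{\rm Cov}(X,Y)]^2}{{\rm Var}(X){\rm Var}(Y)}=[{\rm Corr}(X,Y)]^2,
$$
which is the asserted identity.

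Since a squared correlation lies in $[0,1]$, this already forces $ac\in[0,1]$; moreover $a$ and $c$ must share a common sign, as each equals ${\rm Cov}(X,Y)$ divided by a strictly positive variance. It then remains to treat the two endpoints. If $ac=0$, then $[{\rm Corr}(X,Y)]^2=0$, so ${\rm Cov}(X,Y)=0$; feeding this back into ${\rm Cov}(X,Y)=c\,{\rm Var}(Y)=a\,{\rm Var}(X)$ and using the positivity of the variances forces $c=0$ and $a=0$, which is case (i). If instead $ac=1$, then $[{\rm Corr}(X,Y)]^2=1$, and the equality condition in Cauchy--Schwarz says that $X$ and $Y$ are almost surely affinely related. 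The slope of the corresponding line is recovered from ${\rm Cov}(X,Y)=a\,{\rm Var}(X)$, hence equals $a$, and its intercept is fixed by $\mathbb{E}[Y]=a\,\mathbb{E}[X]+b$, giving $Y=aX+b$, which is case (iii). The remaining range $ac\in(0,1)$ is case (ii), for which nothing further is needed.

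There is no serious obstacle here, as the argument is essentially a two-line covariance identity. The only point demanding a little care is the endpoint $ac=1$ in case (iii): one must invoke the equality case of Cauchy--Schwarz (equivalently, that a correlation of modulus one characterises an exact linear relation) and then check that the slope and intercept produced by the two regressions are mutually consistent, i.e. that $Y=aX+b$ is indeed compatible with the reverse relation $X=cY+d$ precisely when $ac=1$ (so that $c=1/a$ and $d=-b/a$).
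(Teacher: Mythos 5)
Your proof is correct and follows essentially the same route as the paper's: both derive ${\rm Cov}(X,Y)=a\,\mathbb{V}[X]=c\,\mathbb{V}[Y]$ (the paper via the regression-residual decomposition $Y=aX+b+\epsilon_1$, you via iterated expectations — the same computation), conclude $ac=[{\rm Corr}(X,Y)]^2\in[0,1]$, and invoke the equality case of Cauchy--Schwarz when $ac=1$. If anything, your treatment of the endpoint cases (i) and (iii) is slightly more explicit than the paper's, which is fine.
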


 \begin{proof}
 See Appendix \ref{prooflemma1}.
 \end{proof}

By virtue of Lemma \ref{principalminor2}, given $a,b,c, d>0$ and $ac<1$,  can we always find a joint distribution $
(X, Y)$ satisfying \eqref{linearconditionalexp}?  This remains an open problem but in the remainder of this subsection, we will provide some partial answers.

First,  most of the bivariate distributions that we have examined in section 3 and 4 that solve the compound autoregressive or the random coefficient model involve some sort of common factor,  which leads to deterministic relationship between $a, b, c, d$.  For instance,
\begin{itemize}
\item in the trivariate Poisson reduction model of Corollary 2,  $a, b, c, d$ can be reparameterized using the three Poisson parameters $\lambda_0,  \lambda_1$ and $\lambda_2$ (see Remark 1).  Moreover,  this model  requires both $a$ and $c$ to be smaller than 1.
\item in the Poisson-gamma conjugacy (see \eqref{poissnogamma} in Example 1), we can check that $a/b=c/d$.  Moreover,  this model  requires both $a$ and $c$ to be smaller than 1.
\item in the trivariate NB reduction example of Theorem 4 (see \eqref{trivariatenb}),  we can check that $\frac{b}{d}=\frac{(1-a)c}{a(1-c)}$.  Moreover,  this model  requires both $a$ and $c$ to be smaller than 1.
\item in the beta-NB (see \eqref{betanb} in Example 3),  we can check that $a/b=c/d$.
\end{itemize}

The only example without deterministic relationship between $a, b, c, d$ is case $ii)$ of Theorem 2.  For this model, the domain of $(a, b, c, d)$ is as follows:
\begin{thm}
\label{abcde}
In model \eqref{zeroinflated}-\eqref{conditionalexpectation},  the domain of $a, b, c, d$ is composed of five regions:

\noindent (a) $a\ge1>c,ac<1$, $ \frac{b}{d} >\sqrt{\frac{a}{c}}$.

\noindent (b) $c\ge1>a,ac<1$, $ \frac{b}{d} <\sqrt{\frac{a}{c}}$.

\noindent (c) $a=c<1$, $ \frac{b}{d} =\sqrt{\frac{a}{c}}=1$.

\noindent (d) $c<a<1$, $1<\sqrt{\frac{a}{c}}<\frac{b}{d}<\frac{a(1-c)}{c(1-a)}$.

\noindent (e) $a<c<1$, $1>\sqrt{\frac{a}{c}}>\frac{b}{d}>\frac{a(1-c)}{c(1-a)}$.

\end{thm}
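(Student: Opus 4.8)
The plan is to invert the parametrization $(\theta_1,\theta_2,\theta_3,\theta_4,\delta)\mapsto(a,b,c,d)$ induced by the conditional expectations in \eqref{conditionalexpectation} and to determine exactly which targets admit an admissible preimage. Reading off \eqref{conditionalexpectation} gives $c=\theta_1-\theta_2$, $a=\theta_3-\theta_4$, $d=\delta\theta_1$ and $b=\delta\theta_3$, so that $a,b,c,d>0$ and the ratio $r:=b/d=\theta_3/\theta_1$ is intrinsic. Consequently any preimage of a prescribed $(a,b,c,d)$ must satisfy $\theta_3=r\theta_1$, $\theta_2=\theta_1-c$, $\theta_4=r\theta_1-a$ and $\delta=d/\theta_1$, leaving $\theta_1>0$ as the single remaining unknown. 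First I would substitute these relations into the coupling constraint \eqref{sameproduct}: the apparent $\theta_1^2$ terms cancel, and it collapses to the linear equation $(cr^2-a)\,\theta_1=a(1-c)-cr(1-a)$, so that $\theta_1=N/D$ is uniquely determined whenever $D:=cr^2-a\neq0$, where $N:=a(1-c)-cr(1-a)$.

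Next I would convert the admissibility of the pgf's \eqref{zeroinflated}--\eqref{zeroinflated2} into constraints on this $\theta_1$. By the parameter ranges of Theorem 2, Case $ii)$ (see also Remark 3 for the interpretation as zero-inflated or Bernoulli-convolved geometric laws), the pgf's are valid precisely when $\theta_1>0$, $\theta_2>-1$ and $\theta_4>-1$; the conditions $\theta_3>0$, $\delta>0$, $\theta_1>\theta_2$, $\theta_3>\theta_4$ then follow from $a,c>0$. The crucial computation is to evaluate $\theta_2+1$ and $\theta_4+1$ in closed form. One finds $\theta_2+1=crK/D$ and $\theta_4+1=aK/D$, where $K:=r(1-c)+a-1$ is the \emph{same} factor in both. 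This is the decisive simplification: the two validity constraints collapse into the single sign condition $K/D>0$. Combined with $\theta_1=N/D>0$, a valid preimage exists if and only if $N$, $K$ and $D$ all share a common sign, under the standing assumption $ac<1$.

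It then remains to read off the sign pattern of $N,K,D$ as $r$ varies. These vanish at $r_2=\tfrac{a(1-c)}{c(1-a)}$, $r_1=\tfrac{1-a}{1-c}$ and $r_3=\sqrt{a/c}$ respectively, and I would record the elementary identity $r_1r_2=a/c=r_3^2$, which forces $r_3$ to be the geometric mean of $r_1$ and $r_2$ and hence to lie between them. A finite bookkeeping over the admissible configurations of $(a,c)$ — which under $ac<1$ are $a\ge1>c$, $c\ge1>a$, and $a,c<1$ with the last split by $a>c$, $a<c$, $a=c$ — then yields the half-open $r$-intervals of cases (a), (b), (d), (e). For instance, when $a\ge1>c$ one checks directly that $N>0$ and $K>0$ hold for all $r>0$, so admissibility reduces to $D>0$, i.e. $r>\sqrt{a/c}$; when $c<a<1$ one gets $r_1<r_3<r_2$ and the only common-sign window is $r\in(r_3,r_2)=(\sqrt{a/c},\tfrac{a(1-c)}{c(1-a)})$, and symmetrically for the remaining cases.

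Finally I would treat the boundary $D=0$ separately. Here the factorization $N=\sqrt{a}\,(\sqrt{a}-\sqrt{c})(1+\sqrt{ac})$ shows that $N=0$ on $r=\sqrt{a/c}$ only when $a=c$; for $a\neq c$ one has $N\neq0$, so the linear equation has no solution and the boundary contributes nothing, consistent with the strict inequalities in (a), (b), (d), (e). When $a=c<1$, however, $N$, $K$ and $D$ all vanish at $r=1$ (equivalently $b=d$), so \eqref{sameproduct} holds identically and admissibility is automatic for every $\theta_1>0$; this is case (c), where the preimage is in fact a one-parameter family. The main obstacle is precisely the algebraic step of the second paragraph — recognizing that $\theta_2+1$ and $\theta_4+1$ carry the identical factor $K$, which is what makes the region description clean; once this and the threshold identity $r_3^2=r_1r_2$ are established, the five regions drop out of routine sign analysis.
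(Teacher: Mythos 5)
Your proposal is correct and follows essentially the same route as the paper: both invert the parametrization induced by \eqref{conditionalexpectation} (the paper solves for $\delta$, you for $\theta_1=d/\delta$, which is equivalent), observe that \eqref{sameproduct} collapses to a linear equation in that single remaining unknown, and then carve out the five regions by a sign analysis subject to the pgf-validity constraints $\theta_2>-1$, $\theta_4>-1$. Your factorization $\theta_2+1=crK/D$, $\theta_4+1=aK/D$ with the common factor $K=r(1-c)+a-1$, and the geometric-mean identity $r_1r_2=r_3^2$ among the roots of $K$, $N$, $D$, give a more unified bookkeeping than the paper's case-by-case verification (which checks the $>-1$ constraints explicitly only when $a\ge1$ or $c\ge1$, they being automatic for $a,c<1$), but the underlying argument is the same.
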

Thus,  the ratio $\frac{b}{d}$ is partially constrained in model \eqref{zeroinflated}-\eqref{conditionalexpectation}.
\begin{proof}
See Appendix \ref{prooflemma3}.
\end{proof}

Nevertheless,  if we further constraint $a, c$ to be smaller than 1,  then it possible to find other solutions to \eqref{linearconditionalexp} with both $X, Y$ having bounded domains.  More precisely, we have the following result: 
\begin{thm}
\label{farkas}
Suppose that $0<a,c<1$ and $b, d>0$,  then we can find an integer $N$ such that model \eqref{linearconditionalexp} has a solution with $0 \leq X, Y \leq N$.
\end{thm}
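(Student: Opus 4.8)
The plan is to sidestep the genuinely two-dimensional character of \eqref{linearconditionalexp} by forcing one variable to take only two values, which collapses the two-way system into a one-dimensional moment problem. Write $x^\ast=\frac{cb+d}{1-ac}$ and $y^\ast=\frac{ad+b}{1-ac}$ for the common intersection of the two regression lines, so that $x^\ast=cy^\ast+d$ and $y^\ast=ax^\ast+b$; taking total expectations shows any solution of \eqref{linearconditionalexp} must have $\mathbb{E}[X]=x^\ast$ and $\mathbb{E}[Y]=y^\ast$. I would search for a solution with $X\in\{0,K\}$, where $K$ is an integer to be fixed later, and $Y$ supported on $\{0,\dots,N\}$. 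The constraint $\mathbb{E}[X\mid Y=y]=cy+d$ then pins the entire conditional law of $X$ given $Y$: since $X$ is $\{0,K\}$-valued, necessarily $\mathbb{P}(X=K\mid Y=y)=(cy+d)/K$. This is a bona fide probability for every $y\le N$ exactly when $K\ge cN+d$, which is compatible with $K\le N$ as soon as $N>d/(1-c)$.

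The only remaining freedom is the marginal law $w$ of $Y$ on $\{0,\dots,N\}$, and the only remaining constraints are the two column identities $\mathbb{E}[Y\mid X=0]=b$ and $\mathbb{E}[Y\mid X=K]=aK+b$. Writing the joint pmf as $\pi_{K,y}=w_y(cy+d)/K$ and $\pi_{0,y}=w_y\bigl(1-(cy+d)/K\bigr)$ and substituting, one checks that both column means depend on $w$ only through $m_1=\mathbb{E}[Y]$ and $m_2=\mathbb{E}[Y^2]$. Solving the resulting pair of linear equations in $(m_1,m_2)$ gives, after simplification, the forced values $m_1=y^\ast$ and $m_2=\frac{(aK+b)x^\ast-dy^\ast}{c}$. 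Thus the problem reduces to realizing a distribution on $\{0,\dots,N\}$ with prescribed first two moments $(y^\ast,m_2)$.

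The crux, which I expect to be the main obstacle, is the realizability of these two moments. A pair $(m_1,m_2)$ is realized by some distribution on $\{0,\dots,N\}$ precisely when it lies in the convex hull of $\{(k,k^2):0\le k\le N\}$, whose interior is cut out from below by $m_2>m_1^2$ (nonnegative variance) and from above by the chord $m_2<Nm_1$. The lower bound is harmless for large $N$, since $m_2$ grows linearly in $N$ while $(y^\ast)^2$ is fixed. The delicate inequality is $m_2<Ny^\ast$, and it is here that the hypotheses $0<a,c<1$ and $b,d>0$ enter. Taking $K=\lceil cN+d\rceil\approx cN$ makes $m_2\approx aNx^\ast$, so $m_2<Ny^\ast$ reduces in the limit to $ax^\ast<y^\ast$, which holds strictly because $y^\ast=ax^\ast+b$ with $b>0$. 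Hence for all sufficiently large $N$ the target $(y^\ast,m_2)$ sits strictly inside the moment region, and any lattice distribution $w$ attaining it completes the construction.

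Finally, this construction is exactly a feasible point of the linear feasibility program underlying the subsection: a nonnegative measure on $\{0,\dots,N\}^2$ satisfying the linear equalities $\sum_x(x-cy-d)\pi_{xy}=0$ and $\sum_y(y-ax-b)\pi_{xy}=0$. Exhibiting it explicitly is equivalent, by Farkas' lemma (Gordan's alternative), to ruling out any $(\mu,\nu)$ with $\mu_y(x-cy-d)+\nu_x(y-ax-b)>0$ on the whole grid; I would present the construction as the direct certificate rather than argue dual infeasibility, since the latter is logically equivalent to producing the solution anyway. The role of $N$ being large, consistent with Lemma \ref{principalminor2}, is precisely to keep every regression target $cy+d$ and $ax+b$ inside $[0,N]$ so that mass can be placed on both sides of each regression line.
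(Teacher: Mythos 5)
Your construction is correct, but it takes a genuinely different route from the paper's proof. The paper never exhibits a solution: it phrases the problem as the feasibility system \eqref{farka1}--\eqref{farka3}, invokes Farkas' lemma, and then derives a contradiction from any hypothetical dual certificate by evaluating the inequalities \eqref{ij} at the grid points $(i,j)\in\{0,\sqrt{N},N\}^2$, with $N$ a large perfect square chosen so that $1/\sqrt{N}<a,c<1-1/\sqrt{N}$ and $b,d<\sqrt{N}$. You instead produce a primal feasible point: collapsing $X$ to the two-point support $\{0,K\}$ with $K=\lceil cN+d\rceil$ forces $\mathbb{P}(X=K\mid Y=y)=(cy+d)/K$, and the two remaining constraints $\mathbb{E}[Y\mid X=0]=b$ and $\mathbb{E}[Y\mid X=K]=aK+b$ indeed depend on the marginal of $Y$ only through $(m_1,m_2)$ and force $m_1=y^\ast$ and $m_2=\bigl((aK+b)x^\ast-dy^\ast\bigr)/c$; a short computation (using $(1-ac)x^\ast-d=cb$) gives $m_2=aNx^\ast+by^\ast+O(1)$, so the binding constraint $m_2<Ny^\ast$ has slack $b(N-y^\ast)-O(1)\to\infty$, exactly as you argue from $y^\ast-ax^\ast=b>0$. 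What your approach buys is an explicit solution (with $Y$ supported on at most three integers, by Carath\'eodory) and an effective, modest threshold on $N$; what the paper's dual argument buys is a proof with no moment-problem analysis at all, at the price of being purely existential. One point you should tighten: for distributions on the lattice $\{0,\dots,N\}$ the lower boundary of the moment body is not the parabola $m_2=m_1^2$ but the piecewise-linear interpolation of the points $(k,k^2)$ at integers, so ``$m_2>m_1^2$'' is not by itself sufficient for realizability (e.g.\ $(m_1,m_2)=(1/2,3/10)$ satisfies it but is infeasible). This does not damage your proof, since the lower envelope at $m_1=y^\ast$ is bounded by $(2\lfloor y^\ast\rfloor+1)y^\ast$ while your target $m_2$ grows linearly in $N$, but the sufficiency check should be stated against the polygonal envelope rather than the parabola.
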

Note that the condition $0<a,c<1$ is satisfied in both the trivariate Poisson reduction model of Corollary 2 and the trivariate NB model of Theorem 4. Compared to Theorem \ref{abcde}, in Theorem \ref{farkas},  the ratio $b/d$ is no longer constrained,  if both $a$ and $c$ are smaller than 1.  In other words,  when we allow for bounded discrete variables, model \eqref{linearconditionalexp} can allow for extra solutions.

To get an intuition of this Theorem,  let us remark that under the assumption that $X,Y\in\{0,1,\dots,N\}$,  model \eqref{linearconditionalexp} becomes
\begin{eqnarray}
&&\sum_{i,j=0}^Np_{ij}=1,\label{farka1}\\
&&\sum_{j=0}^Njp_{ij}=(ai+b)\sum_{j=0}^Np_{ij},\ \ \ \ i=0,\dots,N \Longleftrightarrow \sum_{j=0}^N(ai+b-j)p_{ij}=0 ,\ \ \ \ i=0,\dots,N,\label{farka2}\\
&& \sum_{i=0}^Nip_{ij}=(cj+d)\sum_{i=0}^Np_{ij},\ \ \ \ j=0,\dots,N \Longleftrightarrow \sum_{i=0}^N(cj+d-i)p_{ij}=0,\ \ \ \ j=0,\dots,N. \label{farka3}
\end{eqnarray}
 This is a linear system of equations under the inequality constraints that $$p_{ij} \geq 0,\ \ \ \  i, j=0,\dots,N. $$ The solvability of this kind of systems can be checked using a classical linear programming result called Farkas' lemma [see e.g. \cite{dax1997classroom} and section 5.8.3 of \cite{boyd2004convex}]:
 \begin{thm}[Farkas' lemma] Let $A\in \mathbb{R}^{m\times n}$ and $b\in\mathbb{R}^m$. Then, exactly one of the following two assertions is true:

\noindent (a) There exists an $x\in \mathbb{R}^n$ such that $Ax=b$ and $x\ge0$.

\noindent (b) There exists a $y\in \mathbb{R}^m$ such that $A^Ty\ge0$ and $b^Ty<0$.
\end{thm}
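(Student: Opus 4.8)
The plan is to prove that the two alternatives are mutually exclusive and jointly exhaustive. Mutual exclusivity (at most one holds) is immediate: if $x\ge 0$ satisfies $Ax=b$ while $y$ satisfies $A^Ty\ge0$, then $b^Ty=(Ax)^Ty=x^T(A^Ty)\ge0$, which contradicts $b^Ty<0$. Hence (a) and (b) cannot both hold, and the entire content of the lemma reduces to showing that if (a) fails then (b) must hold.

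For that implication I would introduce the finitely generated cone $C:=\{Ax:x\in\mathbb{R}^n,\ x\ge0\}$, that is, the set of all nonnegative combinations of the columns $a_1,\dots,a_n$ of $A$. Assertion (a) is exactly the statement $b\in C$, so the working hypothesis is $b\notin C$. The set $C$ is plainly a convex cone, and the target is to separate $b$ from $C$ by a hyperplane through the origin: its normal vector will be the desired $y$.

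To produce the separating vector I would use the Euclidean projection of $b$ onto $C$. Assuming $C$ is closed (and nonempty, since $0\in C$), there is a nearest point $p\in C$, and the projection characterization for convex sets gives $(b-p)^T(z-p)\le0$ for every $z\in C$. Writing $d:=b-p\neq0$ and exploiting that $C$ is a cone, so that $0\in C$ and $2p\in C$, the inequality evaluated at $z=0$ and $z=2p$ forces $d^Tp=0$; consequently $d^Tz\le0$ for all $z\in C$ and $d^Tb=d^Tp+\|d\|^2=\|d\|^2>0$. Setting $y:=-d$ then gives $y^Tz\ge0$ for every $z\in C$, in particular $y^Ta_j=(A^Ty)_j\ge0$ for each column so that $A^Ty\ge0$, while $b^Ty=-\|d\|^2<0$. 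This is precisely (b).

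The main obstacle is justifying that the finitely generated cone $C$ is closed, since the projection argument breaks down for non-closed sets and closedness of $C$ is genuinely not automatic. I would establish it by a Carathéodory-type reduction: any $z\in C$ can be written as a nonnegative combination of a \emph{linearly independent} subset of the columns, because whenever the columns carrying positive weight are linearly dependent, one can transfer weight along the dependency relation to drive one coefficient to zero while keeping all coefficients nonnegative, shrinking the support. It follows that $C$ is the union of the finitely many simplicial cones generated by the linearly independent subsets of $\{a_1,\dots,a_n\}$; each such simplicial cone is the image of the closed orthant under an injective linear map, hence closed, and a finite union of closed sets is closed. As an alternative that avoids topology altogether, I note that the implication can instead be proved by induction on $n$ via Fourier--Motzkin elimination of the variables $x_j$ one at a time, but the projection-and-closedness route is the more transparent of the two.
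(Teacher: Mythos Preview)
Your argument is correct: the mutual-exclusivity step is immediate, and the separation argument via projection onto the finitely generated cone $C=\{Ax:x\ge0\}$ is the standard route, with closedness of $C$ supplied by the Carath\'eodory-type reduction to simplicial cones.

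As for comparison with the paper: there is nothing to compare. The paper does not prove Farkas' lemma at all; it merely quotes it as a classical linear programming result, with references to \cite{dax1997classroom} and \cite{boyd2004convex}, and then applies it to establish Theorem~\ref{farkas}. So your proposal supplies a complete proof where the paper only cites one.
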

 This means that whether or not a linear system of equations with inequality constraints (such as case $(a)$ in the theorem) can be alternatively checked by inspecting whether a solution to the system of inequalities $(b)$ can exist.  For expository purpose,  the proof that \eqref{farka1}-\eqref{farka3} allows for a solution is relegated to the Appendix.

Whether or not Theorem \ref{farkas} can be extended to include the case where either $a>1$ or $c>1$ remains an open problem.  The main difficulty is that, if say  $a\ge1$, then any solution to \eqref{linearconditionalexp} must satisfy the condition that $\mathbb{P}(X=N)=0$.  Indeed,  $\mathbb{P}(X=N)>0$ implies $\mathbb{E}[Y \mid X=n]=a n+b>n$,  which contradicts the assumption that $Y \leq N$ almost surely.

\subsection{The higher-dimensional case}
For $n=3$,  let us consider the linear conditional expectation model \eqref{pm1}-\eqref{pm3},  where coefficients $a_{ij}, i,j=1,2,3$, are non-negative.

Clearly,  all the conjugate prior examples, such as Examples 1 and 3, can be extended to higher dimensions.  One downside of these models, though, is that these models usually imply equality of the regression coefficients for each conditional expectation.  For instance,  for $n=3$,  a model based on conjugate prior would lead to $a_{12}=a_{13}, a_{21}=a_{23}$ and $a_{31}=a_{32}$, such that $\mathbb{E}[X \mid Y, Z]$ depends on $Y, Z$ only through the sum $Y+Z$.  This result is straightforward to extend to any higher dimensions,  and is due to the fact that for linear exponential families,  the sum of the observations is a sufficient statistic.

Is it possible to find alternative, non-conjugate based solutions such that the coefficients $a_{ij}$ are less constrained?  We have the following extension of Lemma 3.
\begin{thm}
\label{principalminor}
If model \eqref{pm1}-\eqref{pm3} has a solution $(X,Y,Z)$ with finite variances and $X, Y, Z$ are linearly independent,  then

\noindent (i) for all the three pairs $(X, Y)$, $(Y, Z)$, $(Z, X)$,  the pairwise conditional expectations $\mathbb{E}[X \mid Y]$,  $\mathbb{E}[X \mid Z]$, $\mathbb{E}[Y \mid X]$,  $\mathbb{E}[Y \mid Z]$, $\mathbb{E}[Z \mid X]$, $\mathbb{E}[Z \mid Y]$ are also linear,  and
\begin{equation}\label{Feb22s1}
1-a_{12} a_{21}>0,  \quad 1-a_{23} a_{32}>0,  \quad 1-a_{13} a_{31}>0.
\end{equation}
\noindent (ii)
\begin{equation}\label{Feb22s2}
\begin{vmatrix}
1 & -a_{12}  & -a_{13} \\
-a_{21}&1 & -a_{23} \\
-a_{31}&   -a_{32}  & 1
\end{vmatrix} >0.
\end{equation}
That is,  all the principal minors of matrix
$
\begin{bmatrix}
1 & -a_{12} & -a_{13} \\
-a_{21}&1 & -a_{23} \\
-a_{31}&  - a_{32}  & 1
\end{bmatrix}
$ are positive.

More generally,  in arbitrary dimension $n \geq 3$,  if $X=(X_1,\dots,X_n)$ is a linearly independent random vector such that each $X_i$ has finite second moment and such that the conditional expectations are all linear:
\begin{eqnarray*}
\mathbb{E}[X_i|X_j,j\not=i]=a_{ii}+\sum_{j\not=i}a_{ij}X_j,\ \ \ \ 1\le i\le n.
\end{eqnarray*}
Then, all the principal minors of matrix $\begin{bmatrix}
1 & -a_{12} & \cdots &-a_{1n} \\
-a_{21}& 1 & \cdots & \cdots \\
\cdots & \cdots & \cdots  & \cdots \\
-a_{n1} & \cdots & \cdots & 1
\end{bmatrix}$ are positive,  and for any subset $(k_1,...,k_l)$ of $\{1,...,n\}$,  the joint distribution of the subvector $(X_{k_1},...,X_{k_l})$ is such that all conditional expectations are linear.
\end{thm}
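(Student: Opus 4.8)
The plan is to reduce the statement to two classical facts about second-moment (covariance) structure, thereby bypassing any distributional assumption beyond finite variances and linear independence. First I would observe that, since each $X_i\in L^2$ and $(X_1,\dots,X_n)$ is linearly independent, the covariance matrix $\Sigma=\mathrm{Cov}(X)$ is positive definite. The conditional expectation $\mathbb{E}[X_i\mid X_j,\,j\neq i]$ is the $L^2$-projection of $X_i$ onto \emph{all} square-integrable functions of $\{X_j:j\neq i\}$; the hypothesis that it is affine means this projection already lies in the affine span of $\{1\}\cup\{X_j:j\neq i\}$, so it must coincide with the projection onto that affine span, i.e. with the best linear predictor. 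Because $\Sigma\succ0$ the coordinates $\{X_j:j\neq i\}$ carry no affine relation, so the representation is unique and the prescribed coefficients $a_{ij}$ ($j\neq i$) are exactly the least-squares regression coefficients $\Sigma_{i,-i}\Sigma_{-i,-i}^{-1}$.

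Next I would pass to the precision matrix $K=\Sigma^{-1}$. By the partitioned-inverse (Schur complement) identity one has $\Sigma_{i,-i}\Sigma_{-i,-i}^{-1}=-K_{i,-i}/K_{ii}$, hence $a_{ij}=-K_{ij}/K_{ii}$ and the matrix $M$ with $M_{ii}=1$, $M_{ij}=-a_{ij}$ factors as $M=D^{-1}K$ with $D=\mathrm{diag}(K_{11},\dots,K_{nn})$. For any index set $T\subseteq\{1,\dots,n\}$ the principal submatrix obeys $M_T=D_T^{-1}K_T$, so $\det M_T=\det K_T\big/\prod_{i\in T}K_{ii}>0$, because $K\succ0$ forces every principal submatrix $K_T$ to be positive definite and every $K_{ii}>0$. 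Specializing $|T|=2$ gives $1-a_{ij}a_{ji}>0$ (which is \eqref{Feb22s1} for $n=3$ and is consistent with the $n=2$ computation in Lemma \ref{principalminor2}), $T=\{1,2,3\}$ gives \eqref{Feb22s2}, and in general all principal minors of $M$ are positive.

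For the subset-linearity claim I would integrate out coordinates one at a time using the tower property. Fixing $i\le n-1$ and writing $S=\{1,\dots,n-1\}\setminus\{i\}$, conditioning $X_i$ first on $X_{-i}=X_S\cup\{X_n\}$ and then on $X_S$, and doing the symmetric computation for $X_n$ on $X_{-n}=X_S\cup\{X_i\}$, yields the linear system
\begin{align*}
\mathbb{E}[X_i\mid X_S]&=a_{ii}+\textstyle\sum_{j\in S}a_{ij}X_j+a_{in}\,\mathbb{E}[X_n\mid X_S],\\
\mathbb{E}[X_n\mid X_S]&=a_{nn}+\textstyle\sum_{j\in S}a_{nj}X_j+a_{ni}\,\mathbb{E}[X_i\mid X_S],
\end{align*}
whose determinant $1-a_{in}a_{ni}$ is positive by the previous step; solving shows $\mathbb{E}[X_i\mid X_S]$ is affine in $X_S$. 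Thus the subvector $(X_1,\dots,X_{n-1})$ again has all its full conditional expectations linear, its covariance is a principal submatrix of $\Sigma$ and hence positive definite, so the first two steps apply verbatim to it and re-supply the pairwise inequalities needed to remove a further coordinate. Inducting downward on the dimension — and noting any subset is reached by deleting coordinates one at a time — every subvector inherits linear conditional expectations; for $n=3$ this is exactly the list of six pairwise expectations in (i).

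The only delicate point, and the step I expect to require the most care, is the first identification: arguing rigorously that an affine conditional expectation must coincide with the best linear predictor and that positive definiteness of $\Sigma$ pins down its coefficients uniquely. Everything afterward is linear algebra (Schur complements and positivity of principal minors) together with the elementary $2\times2$ elimination above. I would also take care to keep the induction non-circular: at each stage the pairwise inequalities $1-a_{in}a_{ni}>0$ are re-derived from the positive definiteness of the relevant principal submatrix of $\Sigma$, rather than being assumed for the reduced coefficients.
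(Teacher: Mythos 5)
Your proposal is correct and follows essentially the same route as the paper's proof: the paper likewise writes the coefficient matrix as a positive diagonal matrix times the inverse covariance matrix --- it derives $A=DC^{-1}$ with $D_{ii}=\mathbb{V}[\epsilon_i]$ by computing covariances of the regression residuals, which is exactly your identity $M=D^{-1}K$ since $\mathbb{V}[\epsilon_i]=1/K_{ii}$ --- then uses positive definiteness of the covariance matrix to conclude that the principal minors are positive, and establishes pairwise/subset linearity by the same tower-property elimination whose $2\times 2$ determinant $1-a_{in}a_{ni}$ is positive. Your write-up is marginally more streamlined (the single formula $\det M_T=\det K_T\big/\prod_{i\in T}K_{ii}$ handles all principal minors at once, and the downward induction for general $n$ is spelled out rather than asserted to be ``completely similar''), but the decomposition and the key positivity argument coincide with the paper's.
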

%This result is straightforward to extend to any dimensions beyond 3.
\begin{proof}
See Appendix \ref{prooflemma4}. 
\end{proof}

Here,  condition \eqref{Feb22s1} is similar to the condition of Lemma 3 and concerns pairwise distributions $(X, Y)$, $(Y, Z)$ and $(Z, X)$,  whereas condition \eqref{Feb22s2} is an extra constraint on the joint distribution $(X, Y, Z)$.  %This result is straightforward to extend to any dimensions.  That is, if $n$ variables have linear conditional expectations, then any subset of these same variables also have linear conditional expectations. 
This result means that as the dimension increases,  it becomes more difficult to find solutions.  For instance,  Examples 6 and 7 above are such that $X, Y, Z$ all allow for pairwise linear conditional expectations,  but do not satisfy jointly model \eqref{pm1}-\eqref{pm3}.
%
%Nevertheless, it is still possible to find solutions at an arbitrary dimension $n$,  at least close to the independence.
%\begin{lm}
%For any given integer $M$ and any positive numbers $a_{11},a_{22},...,a_{nn}$,  we can find a neighbourhood of 0, say, $]0 , \epsilon[$, where $\epsilon>0$,  such that if all the regression coefficients $a_{i,j} , i \neq j$ are in the neighbourhood $]0 , \epsilon[$,  model
%$$
%\mathbb{E}[X_i \mid X_j, j \neq i]=  a_{ii}+ \sum_{j \neq i} a_{ij} X_j, \qquad \forall i=1,...,n
%$$
%has a solution with all $X_i, i=1,...,n$ bounded by $M$.
%\end{lm}
%\begin{proof}
%See Appendix.
%\end{proof}
%Note that the case where all regression coefficients $a_{i,j} , i \neq j$ are equal to zero correspond to the case where $X_1,...,X_n$ are independent.

%We conjecture that if an infinite sequence count $(X_n)$ is such that for any $n$,  we have linear conditional expectation for each $\mathbb{E}[X_i \mid X_1,...,X_{i-1},X_{i+1},...,X_n]$, and if all the correlations between different terms are positive,  then there exists a common factor $U$ such that the sequence $(X_n)$ is i.i.d. given $U$ and each conditional expectation belong to the same linear exponential family, and the marginal distribution of $U$ is conjugate prior to all these conditional distributions.

\section{Conclusion}
In this paper, we have analyzed the compatibility of conditional specifications of multivariate count models that lead to linear conditional expectations.  We have completely characterized all the solutions to the compound autoregressive model and the random coefficient integer autoregressive model in any dimensions.  Finally,  since there are only a smaller number of solutions to these two models,  we also investigated the more general case in which only the conditional expectations are assumed to be linear.  We found that this ``semi-parametric" specification is more flexible,  in the sense that they allow for more solutions.

Our findings have important implications for the statistical modeling of count data, especially for the recent literature on spatial count data \citep{glaser2022spatial, karlis2024multilateral}: $i)$ Many of the recently proposed conditional Poisson model with linear conditional expectation,  or the integer autoregressive model,  may not be compatible \textit{per se},  at least in high dimensions.  As a consequence, such models should be interpreted with precaution.  For instance, they should not be used in a Gibbs sampler to simulate from the joint distribution of $(X_1,...,X_n)$,  since this latter might not even exist.  $ii)$ On the other hand,  since the linear conditional expectation model is much more flexible,  this type of specification could be a strong competitor of the state-of-art spatial count models based on high-dimensional latent factors \citep{aitchison1989multivariate,  diggle1998model, chib2001markov, macnab2003hierarchical,  wakefield2007disease, de2013hierarchical} or random field \citep{roy2020nonparametric}.  Indeed,  the parameters of the linear conditional expectations can be estimated much more easily by (possibly constrained) least square,  or pseudo maximum likelihood estimation with respect to a ``mis-specified" likelihood function such as Poisson likelihood \citep{besag1974spatial, gourieroux1984pseudo,  wooldridge1999quasi}.  This would be much less computer intensive than the typical MCMC methods employed into spatial models with latent factors.

Finally,  the linear type conditional expectation is only the first step towards introducing more flexible,  nonlinear conditional specifications, called dependency network in the machine learning literature  \citep{heckerman2000dependency,  allen2013local, bengio2014deep, hadiji2015poisson}.  This literature has so far largely focused on the algorithm side, but have not addressed the compatibility issue.  This is left for future research.

\bibliographystyle{apalike}
\bibliography{lib}

\appendices
\appendix

\section{Proofs }
\subsection{Proof of Theorem 2}
\label{proofprop2}
We follow the proof of \cite{darolles2006structural}.  The conditional Laplace transforms for eq.\eqref{inar2} are given by:
 $$
\mathbb{E}[e^{-uX}\mid Y] =\mathbb{E}[e^{-a(u)Y-b(u)}], \qquad
\mathbb{E}[e^{-vY}\mid X] =\mathbb{E}[e^{-c(u)X-d(u)}],
$$
where $e^{-a(u)}, e^{-b(u)}, e^{-c(u)}, e^{-d(u)}$ are the Laplace transforms of $Z_i, \epsilon, W_i$ and $\eta$, respectively.  Thus, we can calculate the marginal Laplace transforms $D_1$, $D_2$ in two ways:
\begin{align}
e^{-D_1(u)}:=\mathbb{E}[e^{-uX}]=\mathbb{E}[e^{-a(u)Y-b(u)}]=e^{-D_2(a(u))-b(u)}, \label{marginx} \\
e^{-D_2(v)}:=\mathbb{E}[e^{-vY}]=\mathbb{E}[e^{-c(v)X-d(v)}]=e^{-D_1(c(v))-d(v)}, \label{marginy}
\end{align}
as well as the joint Laplace transform:
\begin{align}
 \mathbb{E}[e^{-uX-vY}]&=\mathbb{E}[e^{-a(u)Y-b(u)-vY}]=e^{-D_2(a(u)+v)-b(u)}, \label{joint1} \\
 \mathbb{E}[e^{-uX-vY}]&=\mathbb{E}[e^{-c(v)X-d(v)-uX}]=e^{-D_1(c(v)+u)-d(v)}. \label{joint2}
\end{align}
Equating eqs.\eqref{joint1} and \eqref{joint2}, we get:
$$
D_2(a(u)+v)+b(u)=D_1(c(v)+u)-d(v).
$$
Then, using eqs.\eqref{marginx} and \eqref{marginy} to eliminate $b(u)=D_1(u)-D_2(a(u))$ and $d(v)=D_2(v)-D_1(c(v))$, we get:
\begin{equation*}
D_2(a(u)+v)+ D_1(u)-D_2(a(u))= D_1(c(v)+u)+D_2(v)-D_1(c(v)).
\end{equation*}
Rearranging all the terms depending on $D_2$ (resp., $D_1$) on the LHS (resp., RHS), and using the fact that $D_2(0)=D_1(0)=1$ by definition of the Laplace transform, we get:
$$
[D_2(a(u)+v)-D_2(a(u))]-[D_2(v)-D_2(0)]= [D_1(c(v)+u)-  D_1(u)]-[D_1(c(v))-D_1(0)].
$$

Under the assumption that $X$ and $Y$ have finite moments of all integer orders,  applying Taylor's expansion on both sides of this equality, we get:
\begin{equation}
\label{infinitetaylor}
\sum_{n=1}^\infty \frac{1}{n!} \Big[ \frac{\mathrm{d}^n D_2}{\mathrm{d}  u^n}(a(u)) - \frac{\mathrm{d}^n D_2}{\mathrm{d}  u^n}(0) \Big] v^n = \sum_{n=1}^\infty \frac{1}{n!} \Big[  \frac{\mathrm{d}^n D_1}{\mathrm{d}  u^n}(u) - \frac{\mathrm{d}^n D_1}{\mathrm{d}  u^n}(0) \Big] c(v)^n .
\end{equation}
Similarly, $c(v)$ is Taylor expandable at $v=0$.

Let us now equate the coefficients in front of $v$ in eq.\eqref{infinitetaylor}. We get:
$$
\frac{\mathrm{d} D_2}{\mathrm{d}  u}(a(u)) -\frac{\mathrm{d} D_2}{\mathrm{d}  u}(0) = \frac{\mathrm{d}c}{\mathrm{d}u}(0) \Big[ \frac{\mathrm{d} D_1}{\mathrm{d}  u}(u) -\frac{\mathrm{d} D_1}{\mathrm{d}  u}(0) \Big].
$$
Thus, if $\frac{\mathrm{d} D_2}{\mathrm{d}  u}$ is invertible, we can work out $a(u)$:
\begin{equation}
\label{au}
a(u)=\left(\frac{\mathrm{d} D_2}{\mathrm{d}  u}\right)^{-1} \Bigg [\frac{\mathrm{d} D_2}{\mathrm{d}  u}(0) + \frac{\mathrm{d}c}{\mathrm{d}u}(0) \Big[ \frac{\mathrm{d} D_1}{\mathrm{d}  u}(u) -\frac{\mathrm{d} D_1}{\mathrm{d}  u}(0) \Big] \Bigg],
\end{equation}
where $(\frac{\mathrm{d} D_2}{\mathrm{d}  u})^{-1}$ denotes the inverse of $\frac{\mathrm{d} D_2}{\mathrm{d}  u}$. Similarly, we can equate  the coefficients in front of $v^2$ in eq.\eqref{infinitetaylor}. We get:
$$
\frac{\mathrm{d}^2 D_2}{\mathrm{d}  u^2}(a(u)) -\frac{\mathrm{d}^2 D_2}{\mathrm{d}  u^2}(0) = \left[ \frac{\mathrm{d}c}{\mathrm{d}u}(0)\right]^2 \Big[\frac{\mathrm{d}^2 D_1}{\mathrm{d}  u^2}(u) - \frac{\mathrm{d}^2 D_1}{\mathrm{d}  u^2}(0) \Big] +  \frac{\mathrm{d}^2c}{\mathrm{d}u^2}(0) \Big[  \frac{\mathrm{d} D_1}{\mathrm{d}  u}(u) - \frac{\mathrm{d} D_1}{\mathrm{d}  u}(0) \Big].
$$

Let us introduce the functions: $$\gamma_1(u)=\frac{\mathrm{d}^2 D_2}{\mathrm{d}  u^2} \circ \left(\frac{\mathrm{d} D_2}{\mathrm{d}  u}\right)^{-1} (u) , \qquad \gamma_2(u)=\frac{\mathrm{d}^2 D_1}{\mathrm{d}  u^2} \circ \left(\frac{\mathrm{d} D_1}{\mathrm{d}  u}\right)^{-1} (u),$$
and the change of variable:
$$
w_1(u)=\frac{\mathrm{d} D_1}{\mathrm{d}  u}(u).% \qquad w_2(u)=\frac{\mathrm{d} D_1}{\mathrm{d}  u}(u)
$$
Then, eq.\eqref{au} can be written as:
\begin{equation}
\label{firstone}
\gamma_1 \Big[ \alpha_1 w_1+ \alpha_2 \Big]= \alpha_1^2 w_1+\alpha_3 \gamma_2(w_1)+\alpha_4, \qquad \forall w_1,
\end{equation}
where $\alpha_1,...,\alpha_4$ are constants with $\alpha_1 \neq 0$.

Similarly, we can change the roles of $u$ and $v$ in eq.\eqref{infinitetaylor} and get:
$$
\sum_{n=1}^\infty \frac{1}{n!} \Big[ \frac{\mathrm{d}^n D_1}{\mathrm{d}  u^n}(c(v)) - \frac{\mathrm{d}^n D_1}{\mathrm{d}  u^n}(0) \Big] u^n = \sum_{n=1}^\infty \frac{1}{n!} \Big[  \frac{\mathrm{d}^n D_2}{\mathrm{d}  u^n}(v) - \frac{\mathrm{d}^n D_2}{\mathrm{d}  u^n}(0) \Big] a(u)^n.
$$
Similar arguments leads to:
\begin{equation}
\label{secondone}
\gamma_2 \Big[ \beta_1 w_2+ \beta_2 \Big]= \beta_1^2 w_2+\beta_3 \gamma_1(w_2)+\beta_4, \qquad \forall w_2,
\end{equation}
where $\beta_1,...,\beta_4$ are constants with $\beta_1 \neq 0$ and we have used the change of variable $ w_2(v)=\frac{\mathrm{d} D_1}{\mathrm{d}  u}(v)$.

Combining eqs.\eqref{firstone} and \eqref{secondone}, by choosing $w_1=\beta_1 w_2+ \beta_2 $, we get:
$$
\gamma_1 \Big[ \alpha_1 (\beta_1 w_2+ \beta_2)+ \alpha_2 \Big]=\alpha_1^2 (\beta_1 w_2+ \beta_2)+ \alpha_3 \Big[\beta_1^2 w_2+ \beta_3 \gamma_1(w_2)+\beta_4\Big]+ \alpha_4, \qquad \forall w_2.
$$
Hence, we deduce that $\gamma_1$ is a quadratic function.  Similarly, $\gamma_2$ is also quadratic.   Since $\gamma_1(u)=\frac{\mathrm{d}^2 D_2}{\mathrm{d}  u^2} \circ (\frac{\mathrm{d} D_2}{\mathrm{d}  u})^{-1} (u)$, we deduce that $D_2$ solves a Ricatti ordinary differential equation (ODE) [see e.g.  \cite[Proposition 10]{darolles2006structural}]:
$$
\frac{d^2}{d u^2}D_2(u)= k_0+ k_1 \frac{d}{d u}D_2(u)+ k_2 \Big(\frac{d}{d u}D_2(u) \Big)^2.
$$
This Ricatti ODE is solved in Appendix A.1 of \cite{gourieroux2019noncausal}. Taking into account the fact that $X$ is count valued, we get two cases: either $D_2(u)$ is of the log Laplace transform of a Poisson distribution or the log Laplace transform of the NB distribution.  Moreover, in the first case,  both $Z_i$ and $W_i$ are Bernoulli, and $\epsilon, \eta$ are Poisson, whereas in the second case, the pgf's of $Z_i$ and $W_i$ are given by \eqref{zeroinflated} and \eqref{zeroinflated2},  and $\epsilon, \eta$, $X, Y$ are all NB distributed with the same number of successes parameter.

\paragraph{The constraints on the parameters in the Poisson/binomial case.}
These constraints are given by Corollary 2.
% We denote by $\lambda_X$,  $\lambda_2$,  $\lambda_\epsilon$ and $\lambda_{\eta}$ the Poisson parameters of $X$,$Y$, $\lambda_1$,  and $\lambda_2$,  respectively.  Then we compute the marginal expectations of $X$ in two ways:
%\begin{align*}
% \lambda_X =\mathbb{E}[ X]= \beta \mathbb{E}[Y]+ \lambda_\epsilon
%\end{align*}
%Thus we get:
%\begin{equation}
%\lambda_X= \lambda_\epsilon+ \lambda_Y \beta.  \label{x2y}
%\end{equation}
% Similarly, by computing the marginal expectation of $Y$ in two ways, we get
% \begin{equation}
% \lambda_Y= \lambda_\eta+ \lambda_X \alpha.
% \label{y2x}
%\end{equation}
%
%
%We then compute the joint pgf of $(X, Y)$ in two ways:
%$$
%\mathbb{E}[u^X v^Y]=\mathbb{E}\Big[u^X \mathbb{E}[v^Y\mid X ]\Big]=\mathbb{E}\Big[v^Y \mathbb{E}[u^X \mid Y ]\Big]
%$$
%Thus
%\begin{align*}
%\mathbb{E}[u^X v^Y]&= \mathbb{E}[u^X (\alpha v+1-\alpha)^X e^{\lambda_\epsilon(v-1)}]= e^{\lambda_\epsilon(v-1)} e^{\lambda_X[u(\alpha v+1-\alpha)-1] } \\
%\mathbb{E}[u^X v^Y]&= \mathbb{E}[v^Y (\beta u+1-\beta)^Y e^{\lambda_\eta(u-1)}]= e^{\lambda_\eta(u-1)} e^{\lambda_Y[v(\beta u+1-\beta)-1] }
%\end{align*}
%Matching the coefficients leads to $\beta \lambda_Y= \alpha \lambda_X$,  which is condition \eqref{constraintofpoisson}.

\paragraph{The constraints on the parameters in the NB case.}  Similarly,  by computing the marginal expectations of $X$ and $Y$ in two ways,  we get:
$$
\theta_X \delta = \theta_Y \delta (\theta_1-\theta_2)+ \delta \theta_1,  \qquad \theta_Y \delta = \theta_X \delta (\theta_3-\theta_4)+ \delta \theta_3,
$$
where $\theta_X=\frac{1-p_X}{p_X}$, $\theta_Y=\frac{1-p_Y}{p_Y}$, and $p_X, p_Y$ are the probability parameters of the NB distributions of $X$ and $Y$, respectively.  Solving this linear system for $(\lambda_X, \lambda_Y)$ leads to
$$
\lambda_X=\frac{\theta_1+\theta_3(\theta_1-\theta_2)}{1-(\theta_1-\theta_2)(\theta_3-\theta_4)}, \qquad \lambda_Y= \frac{\theta_3+\theta_1(\theta_3-\theta_4)}{1-(\theta_1-\theta_2)(\theta_3-\theta_4)},
$$
which is \eqref{negativebinomialmargin}.

 By computing the joint pgf in two ways,  we get:
 \begin{align*}
\mathbb{E}[u^X v^Y]&= \mathbb{E}[ \Big(\frac{1+\theta_4(1-v)}{1+\theta_3(1-v)}\Big)^X \frac{u^X}{[1+\theta_3(1-v)]^\delta}] = \frac{1}{[1+\theta_3(1-v)]^\delta} \frac{1}{\Big[ 1+ \frac{\theta_1+\theta_3(\theta_1-\theta_2)}{1-(\theta_1-\theta_2)(\theta_3-\theta_4)}[1-u\frac{1+\theta_4(1-v)}{1+\theta_3(1-v)}]\Big]^\delta}, \\
\mathbb{E}[u^X v^Y]&= \mathbb{E}[ \Big(\frac{1+\theta_2(1-u)}{1+\theta_1(1-u)}\Big)^Y \frac{v^Y}{[1+\theta_1(1-u)]^\delta}] = \frac{1}{[1+\theta_1(1-u)]^\delta} \frac{1}{\Big[ 1+ \frac{\theta_3+\theta_1(\theta_3-\theta_4)}{1-(\theta_1-\theta_2)(\theta_3-\theta_4)}[1-v\frac{1+\theta_2(1-u)}{1+\theta_1(1-u)}]\Big]^\delta}.
\end{align*}
Matching the coefficients in front of $uv$ in the denominator leads to \eqref{sameproduct}.

\subsection{Proof of Theorem 4}
\label{proofprop4}
We have:
\begin{align*}
&\ell(x\mid y)=\sum_{i=0}^{\min(x,y)} \binom{y}{i} \mathbb{E}[p^i (1-p)^{y-i}] g(x-i),\\
&\ell(y\mid x)=\sum_{i=0}^{\min(x,y)} \binom{x}{i} \mathbb{E}[q^i (1-q)^{x-i}] h(y-i) ,
\end{align*}
where $p, q$ are random probability parameters and the expectations $ \mathbb{E}[q^i (1-q)^{x-i}],  \mathbb{E}[p^i (1-p)^{x-i}]$ are with respect to their distributions,  and $g$ and $h$ are the pmf's of  $\epsilon$ and $\eta$, respectively.

By the same method as in Corollary 2, we have:
\begin{equation}
\label{aftersimplification}
\displaystyle \frac{\displaystyle \sum_{i=0}^{\min(x,y)} \binom{y}{i} \mathbb{E}[p^i (1-p)^{y-i}] g(x-i)}{\displaystyle\sum_{i=0}^{\min(x,y)} \binom{x}{i} \mathbb{E}[q^i (1-q)^{x-i}] h(y-i)}= \frac{\mathbb{E}[(1-q)^{y}]g(x)}{\mathbb{E}[(1-q)^{x}]h(y)}.
\end{equation}
Let us define, for any $ i=0,...,x, j=0,...,y$:
$$A(i,x)=\binom{x}{i} \mathbb{E}[q^i (1-q)^{x-i}]/\mathbb{E}[ (1-q)^{x}], \qquad  B(j,y)=\binom{y}{j} \mathbb{E}[p^j (1-p)^{y-j}]/\mathbb{E}[  (1-p)^{y}],
$$
and
$$
H(j,y)=\frac{h(y-j)}{h(y)}, \qquad G(i,x)=\frac{g(x-i)}{g(x)}.
$$
Then, eq.\eqref{aftersimplification} becomes:
\begin{equation}
\label{reducedform}
\displaystyle\sum_{i=0}^{\min(x,y)} A(i,x)H(i,y)= \sum_{i=0}^{\min(x,y)} B(i,y)G(i,x).
\end{equation}

Taking $x=0, y \geq 0$, we get:
$$
A(0,0) H(0, y)=B(0, y) G(0,0),\ \ \ \ \forall y.
$$
Similarly, by taking $y=0, x \geq 0$, we get:
\begin{equation}
\label{ga}
B(0,0)G(0,x)=A(0,x)H(0,0),\ \ \ \ \forall x.
\end{equation}
Similarly, by taking $x=1, y \geq 1$, we get:
$$
A(0,1)H(0, y)+ A(1,1)H(1,y)= G(0,1)B(0,y)+G(1,1)B(1,y).
$$
But we already have $A(0,1)H(0, y)=G(0,1)B(0,y)$ by taking $x=1$ in \eqref{ga}.  Thus, we deduce that:
\begin{equation}
\label{part1}
A(1,1)H(1,y)= G(1,1)B(1,y) \Leftrightarrow \frac{H(1,y)}{B(1,y)}=\frac{G(1,1)}{A(1,1)}, \qquad \forall y \geq 1,
\end{equation}
or equivalently
$$
\frac{h(y-1)}{h(y)}= \frac{g(0)}{g(1)} \frac{\mathbb{E}[1-q]}{\mathbb{E}[q]} y \frac{\mathbb{E}[p(1-p)^{y-1}]}{\mathbb{E}[(1-p)^{y}]},  \qquad \forall y \geq 1.
$$

Similarly, by taking  $y=1,  x\geq 1$, we get:
\begin{equation}
\label{part2}
B(1,1)G(1,x)= H(1,1)A(1,x) \Leftrightarrow \frac{A(1,x)}{G(1,x)}=\frac{B(1,1)}{H(1,1)}, \qquad \forall x \geq 1,
\end{equation}
or equivalently
$$
\frac{g(x-1)}{g(x)}= \frac{h(0)}{h(1)} \frac{\mathbb{E}[1-p]}{\mathbb{E}[p]} x \frac{\mathbb{E}[q(1-q)^{x-1}]}{\mathbb{E}[(1-q)^{x}]},  \qquad \forall x \geq 1.
$$
It is straightforward to check that $\frac{G(1,1)}{A(1,1)}\frac{B(1,1)}{H(1,1)} =1
$ by plugging $y=1$ in \eqref{part1} or \eqref{part2}.  Thus, we get:
$$
\frac{A(1,x)}{G(1,x)}=\frac{B(1,y)}{H(1,y)}, \qquad \forall x, y.
$$
This means that
\begin{equation}
	\label{backout}
\frac{\binom{x}{i}\mathbb{E}[q(1-q)^{x-1}]}{\mathbb{E}[(1-q)^{x}]}\frac{g(x-1)}{g(x)}=\frac{A(1,x)}{G(1,x)}=C=\frac{B(1,y)}{H(1,y)}=\frac{\binom{y}{i}\mathbb{E}[p(1-p)^{y-1}]}{\mathbb{E}[(1-p)^{y}]}\frac{h(y-1)}{h(y)}, \qquad \forall x, y,
\end{equation}
where $C$ is a constant.

Taking $x=2, y \geq 2$ in eq.\eqref{reducedform}, we get:
$$
A(0,2)H(0, y)+ A(1,2)H(1,y)+ A(2,2)H(2,y)= G(0,2)B(0,y)+G(1,2)B(1,y) +G(2,2)B(2,y).
$$
But the first terms on the LHS and RHS are equal, and the second terms on the LHS and RHS are also equal. Thus, we get:
$$
 A(2,2)H(2,y)=G(2,2)B(2,y).
$$
Hence,
$$
\frac{\mathbb{E}[p^2(1-p)^{y-2}]\mathbb{E}[(1-p)^{y-1}]}{\mathbb{E}[p(1-p)^{y-2}]\mathbb{E}[p(1-p)^{y-1}]}
$$
does not depend on $y.$ By \cite[page 3887]{gourieroux2019noncausal}, we deduce that $p$ follows the beta distribution. Therefore, using \eqref{backout}, we can deduce the expression of $h$ and check that it is a beta-NB pmf. Similarly, $q$ follows the beta distribution.

\subsection{Proof of Lemma \ref{ex6}}
\label{proofex6}
\paragraph{Proof for model \eqref{w1}-\eqref{w3}}

%Let us show that $(X, Y, Z)$ does not satisfy the linear conditional expectation model \eqref{pm1}-\eqref{pm3}

 By taking $Z=0$ in \eqref{pm1}, we have:
$
\mathbb{E}[X | Y, Z=0] = a_{11}+ a_{12} Y .
$
By taking expectation with respect to $Z=0$, we get:
$
\mathbb{E}[X | Z=0] = a_{11}+ a_{12} \mathbb{E}[Y|Z=0].
$
It is easily checked that $\mathbb{E}[Y|Z=0] =\lambda_2+\lambda_{12}$ and $\mathbb{E}[X | Z=0]=\lambda_1+\lambda_{12}$.

Thus, we get 
$
\lambda_1+\lambda_{12}= a_{11}+ a_{12} (\lambda_{2}+\lambda_{12}).
$
Similarly, by taking $Y=0$ in \eqref{pm1}, we also get
$
\lambda_1+\lambda_{13}= a_{11}+ a_{13} (\lambda_{3}+\lambda_{13}).
$
Finally, since we have $\mathbb{E}[X|Y=0, Z=0]=\lambda_1$, by taking both $Y=Z=0$ in \eqref{pm1}, we get
$
\lambda_1= a_{11}.
$

Let us now take the marginal expectation in \eqref{pm1}. We get:
$$
\lambda_1+\lambda_{12}+\lambda_{13}= a_{11}+ a_{12} (\lambda_{2}+\lambda_{12}+ \lambda_{23})+ a_{13} (\lambda_{3}+\lambda_{13}+\lambda_{23}).
$$
Comparing these three equations leads to $a_{12}=a_{13}=0$ and $a_{11}=\lambda_1+\lambda_{12}+\lambda_{13}=\mathbb{E}[X| Y=Z]$ in \eqref{pm1}.  This contradicts with $\mathbb{E}[X| Y=Z=0]=\lambda_1$. Thus, model \eqref{w1}-\eqref{w3} does not lead to linear conditional expectations.

\paragraph{Proof for model \eqref{w123}-\eqref{w123ter}}  We  take $Z=0$ in \eqref{pm1} and get
$
\mathbb{E}[X \mid Y, Z=0] = a_{11}+ a_{12} Y;
$
and by taking expectation with respect to $Z=0$, we get
$
\mathbb{E}[X \mid Z=0] = a_{11}+ a_{12} \mathbb{E}[Y\mid Z=0].
$
Then, we can check that $\mathbb{E}[Y\mid Z=0] =\lambda_2$ and $\mathbb{E}[X \mid Z=0]=\lambda_1$.  Thus, we get
$
\lambda_1=a_{11}+a_{12} \lambda_2.
$
Similarly, by taking $Y=0$ in \eqref{pm2}, we also get
$
\lambda_1=a_{11}+a_{13} \lambda_3.
$
Finally, since we have $\mathbb{E}[X\mid Y=0, Z=0]=\lambda_1$, by taking both $Y=Z=0$ in \eqref{pm1}, we get
$
\lambda_1= a_{11}.
$
This leads to $a_{12}=a_{13}=0$.  Hence, by taking marginal expectation, we get
$
a_{11}=\lambda_{1}+\lambda_{123}.
$
Therefore, we have arrived at a contradiction.

\subsection{Proof of Lemma \ref{marcheplusdimension3}}
\label{proofmarcheplusdimension3}

By the Bayes' formula, the conditional pmf of $N$ given $X$ and $Y$ is proportional to
\begin{align*}
\frac{\Gamma(n+\delta)}{\Gamma(\delta) n!} p_0^\delta (1-p_0)^n p_0^\delta  \frac{\Gamma(n+y+\delta)}{\Gamma(\delta+y) n!} p_1^{\delta+n} (1-p_1)^n  \frac{\Gamma(n+x+\delta)}{\Gamma(\delta+x) n!} p_2^{\delta+n} (1-p_2)^n.
\end{align*}
If we take $x=y=0$,  then this term is the pmf of a NB distribution with number of successes parameter $\delta$ and probability $1-(1-p_0)p_1 p_2$.  Thus,
$$
\mathbb{E}[N \mid X=Y=0]= \delta \cdot\frac{(1-p_0)p_1 p_2}{1-(1-p_0)p_1 p_2}.
$$
Similarly,  by taking $X=0$,  we get:
$$
\mathbb{E}[N \mid X=0, Y=y]= (\delta +y) \frac{(1-p_0)p_1 p_2}{1-(1-p_0)p_1 p_2}, \qquad \mathbb{E}[N \mid X=x, Y=0]= (\delta +x) \frac{(1-p_0)p_1 p_2}{1-(1-p_0)p_1 p_2}.
$$
Hence,  if $\mathbb{E}[N \mid X, Y]$  were linear, we ought to have:
$$
\mathbb{E}[N \mid X, Y]= (\delta +x+y) \frac{(1-p_0)p_1 p_2}{1-(1-p_0)p_1 p_2}.
$$
By taking marginal expectation, we get:
$$
\mathbb{E}[N  ]= (\delta +\mathbb{E}[X]+\mathbb{E}[Y]) \frac{(1-p_0)p_1 p_2}{1-(1-p_0)p_1 p_2}.
$$
Then, we plug in
$$
\mathbb{E}[N]=\delta \frac{1-p_0}{p_0}, \qquad \mathbb{E}[X]= \frac{1-p_1}{p_1}(\delta+ \mathbb{E}[N]), \qquad \mathbb{E}[Y]= \frac{1-p_2}{p_2}(\delta+ \mathbb{E}[N]),
$$
and after tedious but elementary algebra,  we get $(1-p_1)(1-p_2)=0$, which is a contradiction.  Therefore, we deduce that  $\mathbb{E}[N \mid X, Y]$ is not linear.

 \subsection{Proof of Lemma \ref{principalminor2}}
 \label{prooflemma1}
Let  $(X,Y)$ be a solution of (\ref{linearconditionalexp}) with positive variances. By  (\ref{linearconditionalexp}),  we have $Y=aX+b+\epsilon_1$,  where the residual of the regression $\epsilon_1$ is such that $\mathbb{E}[\epsilon_1\mid X]=0$.
Then, $
{\rm Cov}(Y, X)={\rm Cov}(aX+b+\epsilon_1, X)=a \mathbb{V}[X].
$
Similarly,  we have ${\rm Cov}(Y, X)=c \mathbb{V}[Y]$.  Thus,
$$
{\rm Cov}(Y, X)=a  \mathbb{V}[X]=c\mathbb{V}[Y].
$$
Hence
$
a=0\Leftrightarrow c=0,
$
and
$
ac=[{\rm Corr}(X,Y)]^2\in[0,1].
$

Finally,  if $ac=1$,  then ${\rm Corr}(X,Y)=\pm1$, i.e., there is a perfect linear relationship between $X$ and $Y$. Therefore,  (iii) holds by (\ref{linearconditionalexp}).

\subsection{Proof of Theorem \ref{abcde}}
\label{prooflemma3}
In this model, comparing \eqref{conditionalexpectation} with \eqref{linearconditionalexp}, we get:
$$
c=\theta_1-\theta_2,\ \ a=\theta_3-\theta_4,\ \ d=\delta\theta_1,\ \ b=\delta\theta_3.
$$
Then,
$$
\theta_1=\frac{d}{\delta},\ \ \theta_2=\frac{d}{\delta}-c,\ \  \theta_3=\frac{b}{\delta},\ \ \theta_4=\frac{b}{\delta}-a.
$$
We have:
$$
\theta_2 > -1\Leftrightarrow \frac{d}{\delta}+1>c;\ \ \ \ \theta_4 > -1\Leftrightarrow \frac{b}{\delta}+1>a,
$$
and
\begin{eqnarray*}
(\ref{sameproduct})\ {\rm holds}
&\Leftrightarrow&
\left(\frac{b}{\delta}-a\right)\left(\frac{d}{\delta}+\frac{b}{\delta}\cdot c\right)=\left(\frac{d}{\delta}-c\right)\left(\frac{b}{\delta}+\frac{d}{\delta}\cdot a\right)\nonumber\\
&\Leftrightarrow&(b-a\delta)(d+bc)=(d-c\delta)(b+ad)\nonumber\\
&\Leftrightarrow&[ad(1-c)+bc(a-1)]\delta=b^2c-ad^2.
\end{eqnarray*}

\noindent (i) Suppose that $a\ge1,ac<1$, i.e., $a>1,ac<1$ or $a=1,c<1$. Then,
$$
\delta>0\Leftrightarrow \left(\frac{b}{d}\right)^2>\frac{a}{c}.
$$
Note that if
$$
\left(\frac{b}{d}\right)^2>\frac{a}{c},
$$
then
\begin{eqnarray*}
\frac{d}{\delta}+1-c&=&\frac{ad^2(1-c)+bcd(a-1)+(1-c)(b^2c-ad^2)}{b^2c-ad^2}\\
&=&\frac{b(1-c)+d(a-1)}{b^2c-ad^2}\cdot bc>0,
\end{eqnarray*}
and
\begin{eqnarray*}
\frac{b}{\delta}+1-a&=&\frac{abd(1-c)+b^2c(a-1)+(1-a)(b^2c-ad^2)}{b^2c-ad^2}\\
&=&\frac{b(1-c)+d(a-1)}{b^2c-ad^2}\cdot ad>0.
\end{eqnarray*}

\noindent (ii) Suppose that $c\ge1,ac<1$, i.e., $c>1,ac<1$ or $c=1,a<1$. Then,
$$
\delta>0\Leftrightarrow \left(\frac{b}{d}\right)^2<\frac{a}{c}.
$$
Note that  if
$$
\left(\frac{b}{d}\right)^2<\frac{a}{c},
$$
then
$$
\frac{d}{\delta}+1-c>0,\ \ \ \ \frac{b}{\delta}+1-a>0.
$$

\noindent (iii) Suppose that $a,c<1$ and $ad(1-c)+bc(a-1)=0$, which is equivalent to
$$
\frac{a}{c}=\frac{b+ad}{d+bc}.
$$
Then,
$$
\delta>0\Leftrightarrow\left(\frac{b}{d}\right)^2=\frac{a}{c}.
$$
Plugging $\frac{b}{d}=\sqrt{\frac{a}{c}}$ into $
\frac{a}{c}=\frac{b+ad}{d+bc},
$
we get $a=c$,  thus $b=d$.

\noindent (iv) Suppose that $a,c<1$ and
$$
\frac{a}{c}\not=\frac{b+ad}{d+bc}.
$$
Then,
$$
\delta>0\Leftrightarrow \left(\frac{b}{d}\right)^2>\frac{a}{c}>\frac{b+ad}{d+bc}, \ {\rm or}\ \left(\frac{b}{d}\right)^2<\frac{a}{c}<\frac{b+ad}{d+bc}.
$$
We can easily check that the first chain of inequality is only possible when $a>c$ whereas the second is only possible when $a<c$.

\subsection{Proof of Theorem \ref{farkas}}
Suppose that $0<a,c<1$. We choose a perfect square number $N$ (i.e.,  $\sqrt{N}$ is an integer) such that $N>4$ and
$$
\frac{1}{\sqrt{N}}<a,c<1-\frac{1}{\sqrt{N}};\ \ \ \  b,d<\sqrt{N}.
$$
Then,
$$
b<(1-a)N,\ \ \ \ d<(1-c)N.
$$

By Farkas' lemma, the linear system of equations \eqref{farka1}-\eqref{farka3} has non-negative solutions $\{p_{00},\dots,p_{0N},p_{10}$, $\dots,p_{1N},\dots,p_{NN}\}$ if and only if there does not exist $y_0<0$ and $y_1,\dots,y_{2N+2}\in\mathbb{R}$ such that all the following $(N+1)^2$ inequalities hold:
\begin{eqnarray*}
y_0+(ai-j+b)y_{i+1}+(-i+cj+d)y_{j+N+2}\ge 0,\ \ \ \ j=0,\dots,N;i=0,\dots,N,
\end{eqnarray*}
equivalently,
\begin{eqnarray}\label{ij}
(ai-j+b)y_{i+1}+(-i+cj+d)y_{j+N+2}>0,\ \ \ \ j=0,\dots,N;i=0,\dots,N.
\end{eqnarray}

We will show that \eqref{linearconditionalexp} has non-negative solutions by showing that there does not exist $y_0<0$ and $y_1,\dots,y_{2N+2}\in\mathbb{R}$ such that (\ref{ij}) holds. Suppose on the contrary that there exist $y_0<0$ and $y_1,\dots,y_{2N+2}\in\mathbb{R}$ such that (\ref{ij}) holds. Note that (\ref{ij}) with $i=j=0$ becomes
$$
by_{1}+dy_{N+2}>0.
$$
Hence, we deduce that $y_1>0$ or $y_{N+2}>0$.

\noindent (i) Suppose that $y_1>0$. Note that (\ref{ij}) with $i=0,j=\sqrt{N}$ becomes
$$
(-\sqrt{N}+b)y_{1}+(c\sqrt{N}+d)y_{N+\sqrt{N}+2}>0.
$$
Then, $y_{N+\sqrt{N}+2}>0$. Note that (\ref{ij}) with $i=0,j=N$ becomes
$$
(-N+b)y_{1}+(cN+d)y_{2N+2}>0.
$$
Then, $y_{2N+2}>0$.  Note that (\ref{ij}) with $i=N,j=N$ becomes
$$
(aN-N+b)y_{N+1}+(-N+cN+d)y_{2N+2}>0,
$$
which implies that $y_{N+1}<0$. Note that (\ref{ij}) with $i=N,j=\sqrt{N}$ becomes
$$
(aN-\sqrt{N}+b)y_{N+1}+(-N+c\sqrt{N}+d)y_{N+\sqrt{N}+2}>0,
$$
which implies that $y_{N+1}>0$. We have arrived at a contradiction.

\noindent (ii) Suppose that $y_{N+2}>0$. Note that (\ref{ij}) with $i=\sqrt{N},j=0$ becomes
$$
(a\sqrt{N}+b)y_{\sqrt{N}+1}+(-\sqrt{N}+d)y_{N+2}>0.
$$
Then, $y_{\sqrt{N}+1}>0$. Note that (\ref{ij}) with $i=N,j=0$ becomes
$$
(aN+b)y_{N+1}+(-N+d)y_{N+2}>0.
$$
Then, $y_{N+1}>0$.  Note that (\ref{ij}) with $i=N,j=N$ becomes
$$
(aN-N+b)y_{N+1}+(-N+cN+d)y_{2N+2}>0,
$$
which implies that $y_{2N+2}<0$. Note that (\ref{ij}) with $i=\sqrt{N},j=N$ becomes
$$
(a\sqrt{N}-N+b)y_{\sqrt{N}+1}+(-\sqrt{N}+cN+d)y_{2N+2}>0.
$$
We have arrived at a contradiction since
$$
a\sqrt{N}-N+b<-(1-a)N+b<0,\ \ y_{\sqrt{N}+1}>0,\ \ -\sqrt{N}+cN+d>d,\ \ y_{2N+2}<0.
$$

Hence,  there does not exist $y_0<0$ and $y_1,\dots,y_{2N+2}\in\mathbb{R}$ such that (\ref{ij}) holds. Therefore, by Farkas' lemma, we deduce that \eqref{linearconditionalexp} has a solution.

\subsection{Proof of Theorem \ref{principalminor}}
\label{prooflemma4}
Let us focus on the proof of the case where $n=3$.  The case with arbitrary dimension is completely similar.

By \eqref{pm1}, we get $X=a_{11}+a_{12}Y+a_{13}Z+\epsilon_1$, where $\mathbb{E}[\epsilon_1 \mid Y, Z]=0$.  Let us compute ${\rm Cov}(X,  \epsilon_1)$.  On the one hand, we have
$$
{\rm Cov}(X,  \epsilon_1)={\rm Cov}(a_{11}+a_{12}Y+a_{13}Z+\epsilon_1, \epsilon_1)= \mathbb{V}[\epsilon_1].
$$
On the other hand, we have
$$
{\rm Cov}(X,  \epsilon_1)= {\rm Cov}(X,  X-a_{11}-a_{12}Y-a_{13}Z)=\mathbb{V}[X]-a_{12}{\rm Cov}(X, Y)-a_{13}{\rm Cov}(X, Z).
$$
Thus, $$
 \mathbb{V}[\epsilon_1]=\mathbb{V}[X]-a_{12}{\rm Cov}(X, Y)-a_{13}{\rm Cov}(X, Z).
$$
Similarly,  we have that
\begin{align*}
 \mathbb{V}[\epsilon_2]&=\mathbb{V}[Y]-a_{21}{\rm Cov}(X, Y)-a_{23}{\rm Cov}(Y, Z),\\
  \mathbb{V}[\epsilon_3]&=\mathbb{V}[Z]-a_{31}{\rm Cov}(X,Z)-a_{32}{\rm Cov}(Y, Z).
\end{align*}
Hence, in matrix form,
$$
A:=\begin{bmatrix}
1 & -a_{12}  & -a_{13} \\
-a_{21} & 1  & -a_{23} \\
-a_{31} & -a_{32}  & 1
\end{bmatrix}=
DC^{-1},
$$
where $$
C=\begin{bmatrix}
\mathbb{V}[X]& {\rm Cov}(X, Y)  & {\rm Cov}(X, Z) \\
{\rm Cov}(X, Y) & \mathbb{V}[Y] & {\rm Cov}(Y, Z) \\
{\rm Cov}(X, Z) & {\rm Cov}(Y, Z)  & \mathbb{V}[Z]
\end{bmatrix}
$$ 
is the covariance matrix of $(X, Y, Z)$ and $D$ is the diagonal matrix with diagonal elements $\mathbb{V}[\epsilon_1],  \mathbb{V}[\epsilon_2], \mathbb{V}[\epsilon_3]$.  

Since $X, Y, Z$ are assumed linearly independent,  all the diagonal elements of $D$ are positive and $C$ is symmetric positive definite. Thus $\det A= \det  (DC^{-1 } )= \det( C^{-1/2} D   C^{-1/2}  )>0$ since $C^{-1/2} D   C^{-1/2}$ is also symmetric positive definite.

As for the principal minors of $A$ of size 2,  let us, without loss of generality,  focus on $\det A_{\{1,2\}}$ corresponding to the determinant of the first two rows and columns of $A$.  Since $D$ is diagonal,  by block matrix calculation,  we have:
$$
A_{\{1,2\}}= [C^{-1}]_{\{1,2\}} D_{\{1,2\}},
$$
 where $[C^{-1}]_{\{1,2\}} $, obtained as the first two rows and columns of $C^{-1}$, is symmetric positive definite and $
D_{\{1,2\}}$ is diagonal.  Thus,  we deduce that $\det A_{\{1,2\}}=1-a_{12}a_{21}>0$.  Similarly,  we also have that $\det A_{\{1,3\}}>0$,  $\det A_{\{2,3\}}>0$. Hence, all the principal minors of matrix $A$ are positive.

By  \eqref{pm1}-\eqref{pm3} and the tower property of conditional expectation, we get:
\begin{eqnarray*}
&&\mathbb{E}[X\mid Y]=a_{11}+a_{12}Y+a_{13}\mathbb{E}[Z\mid Y],\label{r211}\\
%&&\mathbb{E}[X\mid Z]=a_{11}+a_{12}\mathbb{E}[Y\mid Z]+a_{13}Z,\label{r222}\\
%&&\mathbb{E}[Y\mid X]=a_{21}X+a_{22}+a_{23}\mathbb{E}[Z\mid X],\label{r233}\\
%&&\mathbb{E}[Y\mid Z]=a_{21}\mathbb{E}[X\mid Z]+a_{22}+a_{23}Z,\label{r244}\\
%&&\mathbb{E}[Z\mid X]=a_{31}X+a_{32}\mathbb{E}[Y\mid X]+a_{33},\label{r255}\\
&&\mathbb{E}[Z\mid Y]=a_{31}\mathbb{E}[X\mid Y]+a_{32}Y+a_{33}.\label{r266}
\end{eqnarray*}
Since the principal minor $A_{\{1,2\}}=1-a_{12}a_{21}$ is positive,  we can invert the system and express $\mathbb{E}[X\mid Y]$ and $\mathbb{E}[Z\mid Y]$ as affine functions of $Y$.  Similarly,  the other pairwise conditional expectations, such as $\mathbb{E}[Y\mid X]$,  are also all linear in their respective conditioning variables.

%By (\ref{r211}) and (\ref{r266}), we get
%\begin{eqnarray}\label{f1}
%(1-a_{13}a_{31})\mathbb{E}[X\mid Y]=(a_{11}+a_{13}a_{33})+(a_{12}+a_{13}a_{32})Y.
%\end{eqnarray}
%By (\ref{r233}) and (\ref{r255}), we get
%\begin{eqnarray}\label{f2}
%(1-a_{23}a_{32})\mathbb{E}[Y\mid X]=(a_{22}+a_{23}a_{33})+(a_{21}+a_{23}a_{31})X.
%\end{eqnarray}
%By (\ref{r222}) and (\ref{r244}), we get
%\begin{eqnarray}\label{f3}
%(1-a_{12}a_{21})\mathbb{E}[X\mid Z]=(a_{11}+a_{12}a_{22})+(a_{13}+a_{12}a_{23})Z.
%\end{eqnarray}

%Since the support of $(X,Y,Z)$ is $\mathbb{N}_0^3$, each $a_{ij}$ is non-negative and $a_{ii}>0$ for $1\le i\le 3$. Then, by (\ref{f1})--(\ref{f3}) and the double expectation theorem, we deduce that (\ref{Feb22s1}) holds.
%Further, by (\ref{f1}) and (\ref{f2}), we get
%\begin{eqnarray*}
%\mathbb{E}[X\mid Y]&=&\frac{a_{12}+a_{13}a_{32}}{1-a_{13}a_{31}}Y+\frac{a_{11}+a_{13}a_{33}}{1-a_{13}a_{31}},\nonumber\\
%\mathbb{E}[Y\mid X]&=&\frac{a_{21}+a_{23}a_{31}}{1-a_{23}a_{32}}X+\frac{a_{22}+a_{23}a_{33}}{1-a_{23}a_{32}},
%\end{eqnarray*}
%which together with  the double expectation theorem implies that
%$$
%\left(1- \frac{a_{21}+a_{23}a_{31}}{1-a_{23}a_{32}}\cdot\frac{a_{12}+a_{13}a_{32}}{1-a_{13}a_{31}}\right)\mathbb{E}[Y]=\frac{a_{22}+a_{23}a_{33}}{1-a_{23}a_{32}}+\frac{a_{21}+a_{23}a_{31}}{1-a_{23}a_{32}}\cdot\frac{a_{11}+a_{13}a_{33}}{1-a_{13}a_{31}}.
%$$
%Thus,
%$$
%\frac{a_{21}+a_{23}a_{31}}{1-a_{23}a_{32}}\cdot\frac{a_{12}+a_{13}a_{32}}{1-a_{13}a_{31}}<1,
%$$
%which is equivalent to (\ref{Feb22s2}).

\end{document}